\documentclass[12pt]{amsart}

\usepackage[a4paper,centering,margin=2.5cm]{geometry}
\usepackage{cite}
\usepackage{color,hyperref,cleveref}
\definecolor{darkblue}{rgb}{0.0,0.0,0.3}
\hypersetup{colorlinks,breaklinks,
            linkcolor=darkblue,urlcolor=darkblue,
            anchorcolor=darkblue,citecolor=darkblue}

\usepackage{amssymb}
\usepackage{graphicx,psfrag}
\parindent=0pt
\def\boxit{$\sqcap\kern-8pt\sqcup$}

\newcommand{\R}{\mathbb{R}}
\newcommand{\Ort}{\mathrm{ort}}

\newcommand{\p}{\mathbb{P}}

\theoremstyle{plain}
\newtheorem{theorem}{Theorem}[section]
\newtheorem{lemma}[theorem]{Lemma}
\newtheorem{proposition}[theorem]{Proposition}
\newtheorem{corollary}[theorem]{Corollary}

\newtheorem{remark}[theorem]{Remark}
\newtheorem{question}[theorem]{Question}

\theoremstyle{definition}
\newtheorem{definition}[theorem]{Definition}
\newtheorem{conjecture}[theorem]{Conjecture}
\newtheorem*{conjecture*}{Conjecture}
\newtheorem{example}[theorem]{Example}

\newtheorem*{problem*}{Problem}

\newcommand{\M}{\mathcal{M}}



\title{On $k$-neighborly reorientations of oriented matroids}

\dedicatory{}

\author{Rangel Hern\'andez-Ortiz}
\address{Universitat Rovira i Virgili, Departament d'Enginyeria Inform\`{a}tica i Matem\`{a}tiques, Av. Pa\"{i}sos Catalans 26, 43007 Tarragona, Spain}
\email{rangel.hernandez@urv.cat}

\author{Kolja Knauer}
\address{Departament de Matem\`atiques i Inform\`atica, Universitat de Barcelona, Spain, \newline LIS, Aix-Marseille Universit\'e, CNRS, and Universit\'e de Toulon, Marseille, France.}
\email{kolja.knauer@ub.edu}

\author{Luis Pedro Montejano}
\address{Serra H\'unter Fellow, Universitat Rovira i Virgili, Departament d'Enginyeria Inform\`{a}tica i Matem\`{a}tiques, Av. Pa\"{i}sos Catalans 26, 43007 Tarragona, Spain}
\email{luispedro.montejano@urv.cat}

\keywords{Oriented matroids, $k$-neighborly reorientation, alternating oriented matroid, orthogonality}

\subjclass[2010]{52C40, 05B35, 05Dxx, 68Rxx, 52C35}
\date{\today}

\begin{document}

\begin{abstract}
We study the existence and the number of $k$-neighborly reorientations of an oriented matroid. This leads to $k$-variants of McMullen's problem and Roudneff's conjecture, the case $k=1$ being the original statements. 
 Adding to results of Larman and García-Colín, we provide new bounds on $k$-McMullen's problem and prove the conjecture for several ranks and $k$ by computer. Further, we show that $k$-Roudneff's conjecture for fixed rank and $k$ reduces to a finite case analysis. As a consequence we prove the conjecture for odd rank $r$ and $k=\frac{r-1}{2}$ as well as for rank $6$ and $k=2$ with the aid of the computer.
\end{abstract}

\maketitle

\section{Introduction}\label{intro}
We assume some knowledge and standard notation of the theory of oriented matroids, for further reference the reader can consult the standard reference  \cite{BVSWZ99}.
An \emph{oriented matroid} is a pair $\M=(E,\mathcal{C})$ of a finite \emph{ground set} $E$ and a set of \emph{sign-vectors} $\mathcal{C}\subseteq \{+,-,0\}^E$ called \emph{circuits} satisfying a certain set of axioms, see \Cref{def:circuits}. The \emph{size} of a member  $X\in\{+,-,0\}^E$ is the size of its \emph{support} $\underline{X}=\{e\in E\mid X_e\neq 0\}$. Throughout the paper all oriented matroids are considered \emph{simple}, i.e., all circuits have size at least $3$. The \emph{rank} $r$ of $\M$ is the size of a largest set not containing a circuit of $\M$.  An oriented matroid of rank $r$ is called \emph{uniform} if all its circuits are of size $r+1$. Most of the problems we study in this paper reduce to uniform oriented matroids.

\smallskip

For a \emph{sign-vector} $X\in\{+,-,0\}^E$ on ground set $E$ we denote by $X^+:=\{e\in E\mid X_e=+\}$, the set of \emph{positive elements} of $X$, and $X^-:=\{e\in E\mid X_e=-\}$, its set of
\emph{negative elements}. Hence, the set $\underline{X}=X^+\cup X^-$ is the {support} of $X$. For a subset $R\subseteq E$ the \emph{reorientation of $R$} is the oriented matroid $_{-R}\M$ obtained from $\M$ by reversing the sign of $X_e$ for every $e\in R$ and $X\in\mathcal{C}$. The set of all oriented matroids that can be obtained this way from $\M$ is the \emph{reorientation class} $[\M]$ of $\M$.
 We denote by $-X$ the sign-vector $_{-E}X$ where all signs are reversed, i.e., such that $-X^+=X^-$ and $-X^-=X^+$.
 We say that $X$ is \emph{positive} if $X^-=\emptyset$ and $\underline{X}\neq\emptyset$.

\begin{definition}\label{def:circuits}
An oriented matroid $\mathcal{M}=(E,\mathcal{C})$ is a pair of a finite ground set $E$ and a collection of signed sets on $E$ called \emph{circuits}, satisfying the following axioms:
\begin{itemize}
\item[(C0)] $\emptyset\notin\mathcal{C}$,
\item[(C1)] $X\in \mathcal{C}$ if and only if $-X\in \mathcal{C}$,
\item[(C2)] if $X,Y\in\mathcal{C}$ and $\underline{X}\subseteq \underline{Y}$, then $X=\pm Y$,
\item[(C3)] if $X,Y\in\mathcal{C}$, $X\neq- Y$, and $e\in X^+\cap Y^-$, then there exists a $Z\in \mathcal{C}$, such that $Z^+\subseteq (X^+\cap Y^+)\setminus e$ and $Z^-\subseteq (X^-\cap Y^-)\setminus e$.
\end{itemize}
\end{definition}

An oriented matroid $\mathcal{M}=(E,\mathcal{C})$ is \emph{acyclic} if every circuit has positive and negative signs, i.e., if $|X^+|>0$ and $|X^-|>0$ for every circuit $X\in \mathcal{C}$.
We say that $\mathcal{M}$ is    \emph{$k$-neighborly} if for every subset $R\subseteq E$ of size at most $k$ the reorientation $_{-R}\M$ is acyclic, in other words, if  $|X^+|>k$ and $|X^-|>k$, for every $X\in \mathcal{C}$. A reorientation $R$ of $\mathcal{M}$ is \emph{$k$-neighborly} if $_{-R}\M$ is $k$-neighborly.

\smallskip

In this paper we study $k$-neighborly reorientations of oriented matroids.
In particular we study two well-known conjectures on $1$-neighborly oriented matroids and their generalization to arbitrary $k$: the $k$-McMullen problem and the $k$-Roudneff conjecture.

\smallskip

Along the course of the paper we will provide and use different equivalent descriptions of being $k$-neighborly: in terms of faces (\Cref{lem:faces}), in terms of orthogonality (\Cref{equivalentforms}) in terms of topes (defined in Subsection \ref{section_orthogonality}) and in terms of balls in the tope graph (\Cref{prop:topegraph}).
It follows from the definition that a $k$-neighborly oriented matroid is  $k'$-neighborly for all $0\leq k'\leq k$.
Note that $\M$ is $0$-neighborly if and only if $\M$ is acyclic. If $\M$ is $1$-neighborly, then $\M$ is called \emph{matroid polytope}.
If $\M$ has rank $r$, then it can be at most $\lfloor\frac{r-1}{2}\rfloor$-neighborly and in this case $\M$ is often just called \emph{neighborly}. There is quite some work about {neighborly oriented matroids}, starting with Sturmfels~\cite{S88} and~\cite[Section 9.4]{BVSWZ99} but also more recent works such as~\cite{MP15,P13}. In the realizable setting (see the appendix for the definition of a realizable oriented matroid), a $k$-neighborly matroid is a $k$-neighborly polytope, i.e., a polytope such that every set of at most $k$ vertices are the vertices of a face.

\smallskip

Given an oriented matroid $\M=(E,\mathcal{C})$ the \emph{contraction} of $F\subset E$ is the oriented matroid $\mathcal{M}/F=(E\setminus F ,\mathcal{C}/F)$, where $\mathcal{C}/F$ is the set of support-minimal sign-vectors from $\{X\setminus F\mid X\in\mathcal{C}\}\setminus\{\mathbf{0}\}$ where $X\setminus F$ is the sign-vector on groundset $E\setminus F$ such that $(X\setminus F)_e=X_e$ for all $e\in E\setminus F$. If $\M$ is a uniform oriented matroid of rank $r$, then $\mathcal{M}/F$ is uniform of rank $\max\{0,r-|F|\}$.
The \emph{deletion} of $F$ from $\mathcal{M}$ is the oriented matroid $\mathcal{M}\setminus F=(E\setminus F ,\mathcal{C}\setminus F)$, where $\mathcal{C}\setminus F=\{X\setminus F\mid X\in\mathcal{C}, \underline{X}\cap F=\emptyset\}\setminus\{\mathbf{0}\}$. If $\M$ is a uniform oriented matroid of rank $r$, then $\mathcal{M}/F$ is uniform of rank $\min\{|E\setminus F|,r\}$.

\smallskip

Two sign-vectors $X,Y\in \{+,-,0\}^E$ are called \emph{orthogonal} if either there are $e,f\in E$ such that $X_eY_e=+$ and $X_fY_f=-$ or $\underline{X}\cap\underline{Y}{=\emptyset}$. The set $\mathcal{L}$ of \emph{covectors} of $\M$ consists of all sign-vectors $X\in \{+,-,0\}^E$ such that $X$ is orthogonal to every circuit $Y\in\mathcal{C}$ of $\M$. The set $\mathcal{C}^*$ of
\emph{cocircuits} of $\M$ consists of the support-minimal elements of $\mathcal{L}\setminus\{\mathbf{0}\}$. The \emph{topes} of $\M$ are defined as $\mathcal{L}\cap \{+,-\}^E$, i.e., as the maximal covectors of $\M$. All these three sets uniquely determine $\M$ and oriented matroids can be axiomatized in terms of them as well, see~\cite{BVSWZ99} for covectors, cocircuits, and~\cite{daS95,Han90} for topes. The \emph{tope graph} $\mathcal{G}(\mathcal{M})$ is the graph whose vertex set are the topes of $\M$, where two vertices are adjacent if they differ in a single coordinate. The (unlabelled) $\mathcal{G}(\mathcal{M})$ determines the reorientation class of $\mathcal{M}$ and purely graph theoretical polynomial time verifiable characterizations of tope graphs of oriented matroids have been obtained recently, see~\cite{KM20}.

\smallskip

The definition of the base axioms of an oriented matroid is due to Guiterrez Novoa~\cite{Gut65} who called the structure multiply ordered sets. Lawrence~\cite{L82} proved that this yields another axiom system for oriented matroids. The term \emph{chirotope} is due to Dress~\cite{Dre85}, who reinvented oriented matroids.

\begin{definition}
Let $r\ge 1$ be an integer  be a set. An oriented matroid of rank $r$ is a pair $\M=(E,\chi)$ of a finite ground set $E$ and a \emph{chirotope} $\chi:E^r\to \{+,0,-\}$ satisfying:
\begin{itemize}
    \item[(B0)] $\chi$ is not identically zero,
    \item[(B1)] $\chi$ is alternating,
    \item[(B2)] if $x_1,\ldots, x_r,y_1, \ldots, y_r\in E$ and $\chi(y_i,x_2,\ldots, x_r)\chi(y_1, \ldots, y_{i-1},x_1,y_{i+1},\ldots, y_r)\geq 0$ for all $i\in[r]$, then $\chi(x_1,\ldots, x_r)\chi(y_1, \ldots, y_r)\geq 0$.
\end{itemize}

\end{definition}

If $\chi: E^r \rightarrow \{-,+\}$ is a chirotope, then $\M=(E,\chi)$ is uniform.
Moreover, if $E=[n]$ and $\chi(B)=+$ for any ordered tuple $B = (b_1<\ldots <b_r)$, then the uniform matroid $\M=(E,\chi)$ is the alternating oriented matroid of rank $r$ on $n$ elements: ${C}_r(n)$. In this model the \emph{reorientation} $_{-R}\M$ is obtained by defining $$_{-R}\chi(x_1,\ldots, x_r)=(-\chi(x_1,\ldots, x_r))^{|\{x_1,\ldots, x_r)\}\cap R|}.$$

Given an oriented matroid $\M=(E,\chi)$ of rank $r$, its \emph{dual} is the oriented matroid $\M^*=(E,\chi^*)$ of rank $n-r$ defined by setting $$\chi^*(x_1,\ldots, x_{n-r})=\chi(x'_1,\ldots, x'_r)\mathrm{sign}(x_1,\ldots, x_{n-r},x'_1,\ldots, x'_r),$$ where $(x'_1,\ldots, x'_r)$ is any permutation of $E\setminus\{x_1,\ldots, x_{n-r}\}$. In particular, $\M$ is uniform of rank $r$ if and only if $\M^*$ is uniform of rank $n-r$. Therefore, we have the following observation which will be useful throughout this paper.

\begin{remark}\label{oneclass_r+2}
For every $r\leq n\le r+2$, there is exactly one reorientation class of uniform rank $r$ oriented matroids on $n$ elements.
\end{remark}

\subsection{$k$-McMullen's problem}\label{intro_k_McMull}
This problem is about the existence of $k$-neighborly reorientations of uniform oriented matroids.
Denote by $\nu(r,k)$ (respectively $\nu_{\mathrm{R}}(r,k)$) the largest $n$ such that any (realizable) uniform oriented matroid $\M$ of rank $r$ and $n$ elements has a $k$-neighborly reorientation. Clearly, $\nu(r,k)\leq \nu_{\mathrm{R}}(r,k)$ for $k\ge 1$.
Note that since every uniform oriented matroid has an acyclic orientation $\nu(r,0)=+\infty$.
 This parameter was originally only studied for $k=1$. First, it was defined for realizable uniform oriented matroids~\cite{L72} (the original version can be found in the appendix)
 and then for general uniform oriented matroids~\cite{CS85}. The following conjecture is known as the \emph{McMullen problem} \cite{L72}:
\begin{conjecture}[McMullen 1972]\label{conjecture_McMullen}
\emph{For any $r\ge 3$ it holds $\nu(r,1)=2r-1$.}
\end{conjecture}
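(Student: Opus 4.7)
The conjecture is the conjunction of two inequalities $\nu(r,1)\le 2r-1$ and $\nu(r,1)\ge 2r-1$, and I would attack them by completely different means. For the upper bound, it suffices to exhibit, for every $r\ge 3$, one uniform rank-$r$ oriented matroid on $2r$ elements with no $1$-neighborly reorientation. I would use the classical realizable construction going back to Larman: $2r$ points in general position in $\mathbb{R}^{r-1}$ arranged as two interlocked ``antipodal'' simplices, homogenized to a rank-$r$ configuration. Translating the geometric picture into the cocircuit language, every signing $R\subseteq E$ forces at least one antipodal pair to lie on the wrong side of a hyperplane spanned by an $(r-1)$-subset, so at least one element of $_{-R}\mathcal{M}$ fails to be extreme.

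For the lower bound, let $\mathcal{M}=(E,\mathcal{C})$ be uniform of rank $r$ with $|E|\le 2r-1$. I would exploit the characterization that $\mathcal{M}$ is $1$-neighborly iff every $e\in E$ supports a non-negative cocircuit, and proceed by induction on $r$. Start from any acyclic reorientation, which exists because $\nu(r,0)=+\infty$, and let $B\subseteq E$ collect the non-extreme elements. For $e\in B$ the contraction $\mathcal{M}/e$ has rank $r-1$ on at most $2r-2$ elements, which lies just above the inductive range $2(r-1)-1$, so the heart of the argument is a local \emph{swap}: identify a subset $S\subseteq E\setminus\{e\}$ whose further flip strictly decreases $|B|$ while preserving acyclicity. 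The existence of such a swap should follow from a pigeonhole argument on the $r$-subsets of $E$, since on only $2r-1$ elements they cannot all simultaneously witness non-extremality after every reorientation.

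The main obstacle, as the fifty-year history of McMullen's problem testifies, is making the swap step uniform in the tight case $|E|=2r-1$: a flip that repairs $e$ typically breaks the extremality of some other element, and no one has yet produced a global potential that strictly decreases under a canonical choice of swap. I would attack this along two complementary lines. Theoretically, sharpen the partition technique of Larman and the matching refinements of Garc\'\i{}a-Col\'{\i}n, extending them from the realizable to the general oriented-matroid setting via the Topological Representation Theorem; a promising potential is the number of non-negative cocircuits, weighted by their supports. Computationally, enumerate the reorientation classes of uniform rank-$r$ oriented matroids on $\le 2r-1$ elements for small $r$ — as the present paper does — both to verify base cases and to diagnose, by inspecting the ``worst'' oriented matroids, the structural invariant that a clean inductive proof would need to track.
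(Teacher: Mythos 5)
The statement you are addressing is an \emph{open conjecture}: the paper proves neither inequality and explicitly records that the lower bound $2r-1\le\nu(r,1)$ is due to Larman~\cite{L72} (realizable case) and Cordovil--Silva~\cite{CS85} (general oriented matroids), while the upper bound $\nu(r,1)\le 2r-1$ is open for $r\ge 6$, the best known being Ram\'irez Alfons\'in's $\nu(r,1)<2(r-1)+\lceil r/2\rceil$. So there is no ``paper's own proof'' to compare against; I will instead assess the plan on its merits.

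Your upper-bound step contains a concrete error. You claim it suffices to exhibit a uniform rank-$r$ oriented matroid on $2r$ elements with no $1$-neighborly reorientation, and that ``two interlocked antipodal simplices,'' i.e. the $2r$ homogenized vertices of a cross-polytope-type configuration, furnish such an example. They do not. The crossed-simplex configurations used by Larman and by Ram\'irez Alfons\'in give counterexamples only at sizes well beyond $2r$ (for $r$ large, roughly $5r/2$); that is precisely why the best known upper bound is $\nu(r,1)<2(r-1)+\lceil r/2\rceil$ rather than $2r$. A rank-$r$ uniform oriented matroid on $2r$ elements with no complete cell is not known to exist for any $r\ge 6$, and producing one (or ruling it out) \emph{is} McMullen's problem. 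Your plan treats the hard direction as if it were a textbook exercise.

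Your lower-bound sketch is closer to what actually exists, but it rediscovers known territory without closing the gap you yourself flag. The statement ``every uniform rank-$r$ oriented matroid on at most $2r-1$ elements has a $1$-neighborly reorientation'' is already proved; Larman's argument (and its oriented-matroid extension by Cordovil and da Silva) works through a careful Radon-partition/cocircuit analysis, not through the ``flip one element, fix it locally, recurse'' scheme you propose. As you note, the na\"ive swap does not monotonically decrease the number of non-extreme elements, and no potential function of the kind you gesture at is known to work. If you want to pursue this direction, the honest deliverable is a new proof of the \emph{already-known} lower bound, which would still be interesting if it were simpler or more structural; but the proposal as written does not supply the missing invariant, and the upper-bound half rests on a construction that provably does not do what you assert.
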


The inequality $2r-1\le \nu_{\mathrm{R}}(r,1)$ has been shown in~\cite{L72} for
realizable uniform oriented matroids and in~\cite{CS85} for general {uniform} oriented matroids, i.e., $2r-1\le \nu(r,1)$.
This conjecture has been verified for $r\leq 5$~\cite{FLS01}, but remains open otherwise. After a series of results~\cite{L72,L86}, the currently best-known upper bound $\nu (r,1)< 2(r-1)+\left\lceil {\frac {r}{2}}\right\rceil$ is due to Ramírez Alfonsín~\cite{R01}. For general positive $k$ we propose the following strengthening of \Cref{conjecture_McMullen}:

\begin{question}\label{question_k-McMullen}
 Does $\nu(r,k)= r+\lfloor\frac{r-1}{k}\rfloor$ hold for all $k=0,\ldots,\lfloor\frac{r-1}{2}\rfloor$ and $r\ge 3$?
\end{question}

\begin{remark}\label{bounds_nu_R}
In \cite[Theorem 1]{GL15} the authors stated that  $r+\lceil\frac{r-1}{k}\rceil\le \nu_{\mathrm{R}}(r,k)<2r-k-1$, for $2\le k\le \lfloor\frac{r-1}{2}\rfloor$, but what they actually prove was $$r+\lfloor\frac{r-1}{k}\rfloor\le \nu_{\mathrm{R}}(r,k)<2r-k+1.$$
\end{remark}
\begin{proof}
The authors stated this result in \cite{GL15} in terms of the dimension $d$, instead of the rank $r$, where $d=r-1$. So, we may reformulate the parameter $\nu_{\mathrm{R}}(r,k)$ in terms of $d$ as
$\nu'_{\mathrm{R}}(d,k)$.
To prove the lower bound, the authors introduce the parameter $\lambda(r-1,k)$ proving that $\lambda(r-1,k)\le (k+1)(r-1)+(k+2)$ \cite[Lemma 9]{GL15}. On the other hand, they proved  in \cite[Equation (1)]{GL15} that
 \begin{equation}\label{lambda_nu}
 \nu'_{\mathrm{R}}(r-1,k)=max\{w \in \mathbb{N} : w\ge  \lambda(w-r-1,k) \}
\end{equation}
Hence, $\lambda(w-r-1,k) \le (k+1)(w-r-1)+(k+2)$ and so, for the positive integer $w$ such that $(k+1)(w-r-1)+(k+2) \le w$, we obtain by (\ref{lambda_nu}) that $w\le\nu'_{\mathrm{R}}(r-1,k)$. Now, from the inequality  $(k+1)(w-r-1)+(k+2) \le w$ it follows that $w \le r+\frac{r-1}{k}$ and since $w$ is an integer, $w\le r+\lfloor \frac{r-1}{k} \rfloor$. Therefore, we conclude by (\ref{lambda_nu}) that $r+\lfloor\frac{r-1}{k}\rfloor\le \nu'_{\mathrm{R}}(r-1,k)=\nu_{\mathrm{R}}(r,k)$.

To prove the upper bound, the authors construct in Definitions $8$ and $9$ of \cite{GL15} (for the cases $k=2$ and $k\ge3$, respectively) examples of realizable uniform oriented matroids, called Lawrence oriented matroids, of rank $r$ and order $2r-k+1$ (and not $2r-k-1$ as mentioned in Theorem $1$ of \cite{GL15}) without $k$-neighborly reorientations. Then, $\nu_{\mathrm{R}}(r,k)<2r-k+1$.
 \end{proof}

In fact, we show in \Cref{small_values_kMcMullen} that the inequality $r+\lceil\frac{r-1}{k}\rceil\le \nu_{\mathrm{R}}(r,k)$ does not hold for $r=6$ and $k=2$. On the other hand, notice that  $r+\lfloor\frac{r-1}{k}\rfloor=2r-k-1$ for $r=5$ and $k=2$, showing that $\nu_{\mathrm{R}}(r,k)<2r-k-1$ is not true.

\smallskip

Question \ref{question_k-McMullen} holds in a sense for $k=0$ since every uniform oriented matroid has an acyclic orientation, i.e.,  $\nu(r,0)=+\infty$. For $k=1$, is just \Cref{conjecture_McMullen} (McMullen's problem) 
and for realizable uniform oriented  matroid it is partially solved since $r+\lfloor\frac{r-1}{k}\rfloor\le \nu_{\mathrm{R}}(r,k)$ (Remark \ref{bounds_nu_R}). Note that yet another variant of McMullen's problem has been studied recently in \cite{garcia2023number}.

\smallskip

 In \Cref{k-McMullen_results} we prove that $r-1+\lfloor\frac{r-1}{k}\rfloor \le \nu(r,k)$  for $r\ge 3$ and every $k=1,\ldots,\lfloor\frac{r-1}{2}\rfloor$ (\Cref{cotainf1v(rk)}). Hence, we are only off by $1$ of the lower bound claimed in \Cref{question_k-McMullen}. Moreover, we  prove the lower bound $r+\lfloor\frac{r-1}{k}\rfloor \le \nu(r,k)$ for some cases (\Cref{cotainf3_v(rk)}).
Further, with the help of the computer, we improve in Section \ref{computer} the upper bound $\nu_{\mathrm{R}}(r,k)<2r-k+1$ for some small values of $r$ and $k$ and as a consequence, we answer \Cref{question_k-McMullen} affirmatively showing that $\nu(5,2)=\nu_{\mathrm{R}}(5,2)=7$, $\nu(6,2)=\nu_{\mathrm{R}}(6,2)=8$
and $\nu(7,3)=\nu_{\mathrm{R}}(7,3)=9$ (\Cref{small_values_kMcMullen}).

\subsection{$k$-Roudneff's conjecture}\label{intro_k_Roudf}
This problem is about the number of $k$-neighborly reorientations of a rank $r$ oriented matroid  {$\mathcal{M}$} on $n$ elements, denoted by $m(\M,k)$.  The \emph{cyclic polytope} of dimension $d$ with $n$ vertices, $C_d(t_1,\ldots, t_n)$,
discovered by Carath\'eodory \cite{Car}, is
the  convex hull in $\R^d$ of $n\ge d+1\ge3$ different points
$x(t_1),\dots ,x(t_n)$ of the moment curve $\mu: \R\longrightarrow
\R^d, \ t \mapsto (t,t^2,\dots ,t^d)$.
Cyclic polytopes play an important role in the combinatorial  convex geometry due to their
connection with certain extremal problems. The Upper Bound theorem due to McMullen is an example of this \cite{Mac}. 
The oriented matroid associated to the cyclic polytope of dimension $d=r-1$ with $n$ elements is the \emph{alternating oriented matroid} ${C}_r(n)$. It is the uniform oriented matroid of rank $r$ and ground set $E=[n]:=\{1, \ldots,n\}$ such that every circuit $X\in \mathcal{C}$ is \emph{alternating}, i.e., $X_{i_j}=-X_{i_{j+1}}$ for all $1\leq j\leq r$ if $\underline{X}=\{i_1, \ldots, i_{r+1}\}$ and $i_1<\ldots<i_{r+1}$.

\smallskip

Denote by $c_r(n,k)=m({C}_r(n),k)$ the number of $k$-neighborly reorientations of ${C}_r(n)$. Since ${C}_r(n)$ is uniform and the $0$-neighborly reorientations are just the acyclic reorientations, we have $c_r(n,0)=2\sum_{i=0}^{r-1}{{n-1}\choose i}$,  see e.g.~\cite{Cor80}. Roudneff \cite{R91} proved that $c_r(n,1)\ge {2}\sum_{i=0}^{r-3}
\binom{n-1}{i}$ and that is an equality for all
$n\ge 2r-1$. In \cite{FR01} it is shown that $c_r(n,1)={2}(\binom{r-1}{n-r+1}+\binom{r}{n-r}+\sum_{i=0}^{r-3} \binom{n-1}{i}{)}$ for $n\ge r+1$.

\smallskip

The following has been conjectured by Roudneff,  originally in terms of projective pseudohyperplane arrangements \cite[Conjecture 2.2]{R91} (the original version can be found in the appendix).


\begin{conjecture}[Roudneff 1991]\label{conj1}
For any rank $r\ge 3$ oriented matroid $\mathcal{M}$ on $n\ge 2r-1$ elements it holds $m(\M,1)\le c_r(n,1)$.
\end{conjecture}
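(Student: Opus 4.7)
The plan is to attempt Conjecture~\ref{conj1} by induction on $n$, with the non-trivial base case $n=2r-1$. Since for $n \geq 2r-1$ we have $c_r(n,1) = 2\sum_{i=0}^{r-3}\binom{n-1}{i}$ by Roudneff, the increment $c_r(n,1) - c_r(n-1,1)$ for $n \geq 2r$ equals $2\sum_{j=0}^{r-4}\binom{n-2}{j}$. The core of the induction would be an increment bound: for any uniform rank $r$ oriented matroid $\M$ on $n \geq 2r$ elements and any element $e \in E$,
\[
m(\M,1) - m(\M\setminus e, 1) \leq c_r(n,1) - c_r(n-1,1).
\]
Combined with the inductive hypothesis $m(\M\setminus e,1) \leq c_r(n-1,1)$, this would yield the conjecture at $n$.

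To establish the increment bound, I would use the cryptomorphic description of $1$-neighborliness announced in \Cref{section_orthogonality}. A reorientation ${}_{-R}\M$ that is $1$-neighborly but whose restriction ${}_{-(R\setminus\{e\})}(\M\setminus e)$ is not must be ``witnessed'' by a cocircuit of $\M$ through $e$ whose deletion of $e$ produces a positive or negative cocircuit of $\M\setminus e$. Summing such witnesses over cocircuits through $e$ and over reorientations should give a combinatorial count whose extremum is attained when these cocircuits are maximally alternating on consecutive elements---precisely the structure of $C_r(n)$.

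For the base case $n = 2r-1$, I would attempt a direct argument, showing $m(\M,1) \leq 2\sum_{i=0}^{r-3}\binom{2r-2}{i}$ by counting non-$1$-neighborly reorientations. Every circuit $C$ of $\M$ (of size $r+1 = \frac{n+3}{2}$, hence quite restrictive) forces certain reorientations to have an all-positive or all-negative signature on $C$, and a careful inclusion--exclusion on these ``forced'' bad reorientations should yield the required lower bound on the number of non-$1$-neighborly reorientations.

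The principal obstacle is that Conjecture~\ref{conj1} is a celebrated open problem, so the increment bound above is exactly the crux of the difficulty. A general uniform oriented matroid admits cocircuits of vastly more varied sign patterns than the alternating cocircuits of $C_r(n)$, and no known comparison technique---combinatorial, via mutations, or via orthogonality---is currently strong enough to pin down the count uniformly over all $\M$. As the paper indicates, for $k \geq 2$ the problem can be reduced to a finite case analysis amenable to computer verification; for $k = 1$, a comparable reduction has so far eluded attempts, suggesting that a genuinely new compression or shifting argument, analogous in spirit to Kruskal--Katona, may be required.
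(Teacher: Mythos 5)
You are correct that Conjecture~\ref{conj1} is a celebrated open problem and that the paper does not prove it; it is stated as background, with partial progress cited for $r\leq 5$ (\cite{R99,HOKMS23}) and for Lawrence oriented matroids (\cite{MR15}). There is therefore no proof in the paper to compare against, and your honest acknowledgment of that is the right reading.

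That said, the strategy you sketch is essentially the paper's own reduction (Lemma~\ref{thm:onlythebaseishard} and Theorem~\ref{prop:onlythebaseishard}), with one structural difference worth flagging. You phrase the induction as an \emph{increment bound}
$m(\M,1)-m(\M\setminus e,1)\le c_r(n,1)-c_r(n-1,1)$,
to be proven directly by accounting for cocircuits through $e$. The paper instead proves the clean inequality $m(\M,k)\le m(\M\setminus e,k)+m(\M/e,k)$ (injecting $k$-orthogonal topes of $\M$ into those of $\M\setminus e$ and charging collisions to $\M/e$), and then uses the binomial identity $c_r(n,k)=c_r(n-1,k)+c_{r-1}(n-1,k)$ derived from \Cref{O-formula_neighb}. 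Under that recursion your increment bound is exactly $m(\M,1)-m(\M\setminus e,1)\le m(\M/e,1)\le c_{r-1}(n-1,1)$, but the last step invokes the conjecture at rank $r-1$ — so the argument does not close; it only pushes the difficulty down to the finitely many base cases $n'=2(r'-1)+1=2r'-1$, $r'\le r$, which is precisely what Theorem~\ref{prop:onlythebaseishard} accomplishes. Your base-case proposal (inclusion–exclusion over circuits at $n=2r-1$) is where a genuinely new idea would be required, and the paper offers no tool that resolves it for $k=1$: the computer verification succeeds only for $k\geq 2$ in small rank, because the case $k=(r-1)/2$ collapses to a single reorientation class (\Cref{r_odd_max_ort}) and $r=6,k=2$ is within reach by enumeration, whereas $k=1$ at $n=2r-1$ already exceeds what enumeration can handle beyond $r=5$. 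So your diagnosis of the obstruction is accurate; the proposal is a correct description of the known reduction, not a proof.
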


The above conjecture is stated for $r\ge 3$ since for $r=1,2$ there is only one reorientation class and clearly $m(\M,1)= c_r(n,1)$ (see \cite[Section 6.1]{BVSWZ99}). The case $r=3$ is not difficult to prove, the case $r=4$  has been shown in \cite{R99} and recently also for $r=5$~\cite{HOKMS23} as well as for Lawrence oriented matroids~\cite{MR15}.
Furthermore, in~\cite{BBLP95} it is shown that for realizable oriented matroids of rank $r$ on $n$ elements, the number of $1$-neighborly reorientations is $2\binom{n}{r-3}+O(n^{r-4})$, i.e., Roudneff's conjecture holds asymptotically in the realizable setting.
In~\cite[Question 2]{MR15} the authors asked if the conjecture holds for $n\ge r+1$ and again, it turns out that it is true for $r\leq 5$~\cite{HOKMS23} and for  Lawrence oriented matroids~\cite{MR15}.
In the same manner as for McMullen's problem we propose the $k$-variant of the above question.

\begin{question}\label{quest:kRoudneff}
Is it true that  $m(\mathcal{M},k)\le c_r(n,k)$, for any rank $r\ge 3$ oriented matroid $\mathcal{M}$ on $n> r$ elements and $0\leq k\leq \lfloor\frac{r-1}{2}\rfloor$?
 \end{question}
The above question holds for $k=0$, since all oriented matroids of given rank $r$ and number $n$ of elements have at most the number of acyclic reorientations of (any) uniform oriented matroid~\cite{Cor80}, moreover it holds trivially for $n\le r+2$ (\Cref{oneclass_r+2}). For $k=1$ \Cref{quest:kRoudneff} combines Roudneff's conjecture and~\cite[Question 2]{MR15}. Hence, the answer is positive for $k=1$ if $r\leq 5$.
 We prove in \Cref{prop:onlythebaseishard} that in order to answer \Cref{quest:kRoudneff} affirmatively for a fixed $r$ and $k$, it is enough to prove it for uniform oriented matroids with $r+1\leq n\leq 2(r-k)+1$ and all rank $r'\le r$ uniform oriented matroid $\M'$ on $n'=2(r'-k)+1$ elements.
Thus, the question reduces to a finite number of cases. As a consequence we answer \Cref{quest:kRoudneff} in the positive  for odd $r$ and $k=\lfloor\frac{r-1}{2}\rfloor$  ({\Cref{r_odd_max_ort}}). Moreover, in \Cref{coro_cyclique_unique} we answer \Cref{quest:kRoudneff}  in the affirmative for $r=6$ and $k=2$. One might think that as in the Upper Bound Theorem~\cite{Mac} the reorientation class of $\mathcal{C}_r(n)$ is unique in attaining the maximum in \Cref{quest:kRoudneff}. However, in \Cref{thm_computer} we show that different reorientation classes attain $c_r(n,k)$, for $r=5,n=8,9$ and $k=2$ and for $r=7,n=10$ and $k=3$.

\smallskip

As a tool in the study of \Cref{quest:kRoudneff} we make use of a refinement of $m(\M,k)$, namely we define {the \emph{$o$-vector} of $\M$, as the vector with entries  $o(\M,k)$, for every $k=0,1,\ldots,\lfloor\frac{r-1}{2}\rfloor$, where $o(\M,k)$ is}
the number of reorientations of $\M$ that are $k$-neighborly but not $(k+1)$-neighborly. 
In {\Cref{O-formula_neighb}} we compute this parameter for the alternating oriented matroid, which lies at the heart of the proof of \Cref{prop:onlythebaseishard}. We then proceed to study $o(\M,k)$ as a parameter of independent interest, and note that here the role of the alternating oriented matroid is more complicated than for $m(\M,k)$.
On the one hand, the alternating oriented matroid maximizes $o(\M, k)$ for  $n\le r+2$ (\Cref{oneclass_r+2}), for odd $r$ and $k=\lfloor\frac{r-1}{2}\rfloor$  (\Cref{r_odd_max_ort}) and for $r=6$ and $k=2$ (\Cref{coro_cyclique_unique}). In \Cref{thm_computer} we show that  for $r=5$, $k=1$ and $n=8,9$, for $r=6$, $k=2$ and $n=9$,  and for $r=7$, $k=2$ and $n=10$, the alternating oriented matroid is even unique (up to reorientation) with this property. On the other hand, for $r=6$, $k=1$ and $n=9$ and for $r=7$, $k=1$ and $n=10$, there are (up to reorientation) $91$ and $312336$  uniform oriented matroids $\mathcal{M}$ of rank $r$ on $n$ elements such that $o(\mathcal{M},1)> o(\mathcal{C}_6(9),1)$ and $o(\mathcal{M},1)> o(\mathcal{C}_7(10),1)$, respectively (\Cref{thm_computer} (c) and (d)).

\smallskip

\subsection{Organization of the paper}
The structure of the paper is as follows.
\smallskip


In Section \ref{intro}, we introduce some basic notions of oriented matroid theory and explain the $k$-McMullen problem and the $k$-Roudneff conjecture.

\smallskip

In Section \ref{k-McMullen_results} we study $k$-McMullen's problem providing some lower bounds of  $\nu(r,k)$.

\smallskip

In Section~\ref{sec:Roudneff} we study $k$-Roudneff's conjecture. 
First, we present in Subsection \ref{section_orthogonality} two cryptomorphic descriptions of $k$-neighborliness (Propositions \ref{equivalentforms} and
\ref{prop:topegraph}), most importantly the notion of  $k'$-orthogonality which generalizes usual orthogonality of sign-vectors, as well as metric descriptions in terms of the tope graph.
In Subsection \ref{section_cyclic} we study the tope graph of the alternating oriented matroid and obtain  $o(C_r(n),k)$ for $n$ large enough (\Cref{O-formula_neighb}).
In Subsection \ref{section_general} we show that we may restrict \Cref{quest:kRoudneff} to uniform oriented matroids and a finite case analysis (\Cref{prop:onlythebaseishard}) and solve  \Cref{quest:kRoudneff} for odd $r$ and $k=\frac{r-1}{2}$ (\Cref{r_odd_max_ort}).

\smallskip

 In Section \ref{computer} we present computational results for the $k$-McMullen problem and the $k$-Roudneff conjecture. In Subsection \ref{computer_program} we explain our computer program that obtains $o(\M,k)$ for uniform oriented matroids $\M$ via the chirotope. 
We then obtain the maximal $o(\M,k)$ among all $\M$ of rank $r$ and $n$ elements for several values of $r$, $n$, and $k$ (Theorem \ref{thm_computer}). On the one hand, we answer \Cref{question_k-McMullen} affirmatively for  $(r,k)\in (5,2),(6,2),(7,3)$ and show that the lower bound in \Cref{question_k-McMullen} is tight in one more case, i.e., $10\le \nu(7,2)$ (Theorem \ref{small_values_kMcMullen}). On the other hand, using \Cref{prop:onlythebaseishard} we answer
\Cref{quest:kRoudneff}  in the affirmative for  $r=6,k=2$ and $n\ge 9$ (\Cref{coro_cyclique_unique}).

\smallskip

Finally, in the appendix we  present McMullen's problem and Roudneff's conjecture in their original versions.


\section{Results on the $k$-McMullen problem}\label{k-McMullen_results}

First, we present the following description of  $k$-neighborly oriented matroids that will be useful in this section.
Following Las Vergnas~\cite{LV80} a subset $F\subseteq E$ is a \emph{face} of $\mathcal{M}=(E,\mathcal{L})$
 if there is a covector $Y\in\mathcal{L}$ such that $F=E\setminus Y^+$ and $Y^-=\emptyset$.

\begin{lemma}\label{lem:faces}
An oriented matroid $\M=(E,\mathcal{C})$ is $k$-neighborly if and only if every  subset $F\subseteq E$ of size at most $k$ is a {face}.
\end{lemma}
\begin{proof}
Suppose that $\M=(E,\mathcal{C})$ is $k$-neighborly and let $F\subseteq E$ be of size $k$. Then for every subset $F'\subseteq F$ the reorientation $_{-F'}\M$ is acyclic. Thus, we have that $F'$ properly intersects $X^+$ or $X^-$ for every circuit $X$ of $\mathcal{M}$ or is disjoint if $X^+, X^-\neq \emptyset$. This implies that the sign-vector $Y$ that is positive on $F'\setminus E$ and $0$ on $F'$ is orthogonal to all circuits of $\M$. Hence all subsets of $F$ are faces.

Conversely, if for every $F\subseteq E$ of size $k$ all subsets $F'\subseteq F$ are faces, then $_{-F}\M$ is acyclic. This implies that $\M=(E,\mathcal{C})$ is $k$-neighborly.
\end{proof}

The inequality $2r-1\le \nu(r,1)$ has been shown in~\cite{CS85}.
In this section we give some lower bounds for $\nu(r,k)$ and $k\ge 2$.
In order to prove that $n\le\nu(r,k)$, one has to prove that any uniform rank $r$ oriented matroid $\M$ on $n$ elements has a reorientation that is $k$-neighborly. 
In particular, if $n\le r+2$ then $\M$ is in the same reorientation class as the alternating oriented matroid by \Cref{oneclass_r+2}. So, we have the following observation.

\begin{remark}\label{McMullenr+2}
  If $\lfloor\frac{r-1}{k}\rfloor\le2$, then $r+\lfloor\frac{r-1}{k}\rfloor \le \nu(r,k).$
\end{remark}

\begin{theorem}\label{cotainf1v(rk)}
 For every  $k=2,\ldots,\lfloor\frac{r-1}{2}\rfloor$, we have $r-1+\lfloor\frac{r-1}{k}\rfloor\le \nu(r,k)$.
\end{theorem}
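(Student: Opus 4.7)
The plan is to prove the theorem by induction on $n$, with the rank $r$ and parameter $k$ fixed. By the definition of $\nu(r,k)$, it suffices to show that every uniform rank $r$ oriented matroid $\M$ on $n\le r-1+\lfloor(r-1)/k\rfloor$ elements admits a $k$-neighborly reorientation.

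For the base case $n\le r+2$, \Cref{oneclass_r+2} implies that $\M$ lies in the reorientation class of the alternating oriented matroid $C_r(n)$, which is $\lfloor(r-1)/2\rfloor$-neighborly by direct inspection of its alternating circuits, and hence $k$-neighborly for every admissible $k$. For the inductive step, assume the statement holds for $n-1$, and let $\M$ be a uniform rank $r$ oriented matroid on $r+3\le n\le r-1+\lfloor(r-1)/k\rfloor$ elements. Pick any $e\in E$ and consider the deletion $\M\setminus e$, a uniform rank $r$ oriented matroid on $n-1$ elements. By the inductive hypothesis, $\M\setminus e$ admits a $k$-neighborly reorientation; by \Cref{equivalentforms}, this is equivalent to the existence of a $(k+1)$-orthogonal tope $T$ of $\M\setminus e$. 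The crux is then to lift $T$ to a $(k+1)$-orthogonal tope $\widetilde T$ of $\M$ by choosing a sign $\widetilde T_e\in\{+,-\}$ for the remaining coordinate.

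Circuits of $\M$ not containing $e$ coincide with circuits of $\M\setminus e$, so $\widetilde T$ automatically inherits $(k+1)$-orthogonality to these from $T$. For a circuit $X$ of $\M$ with $e\in \underline{X}$, write $h_X=|H(X\setminus e,T)|$, so that $|S(X\setminus e,T)|=r-h_X$. A direct computation in the spirit of the proof of \Cref{equivalentforms} shows that $\widetilde T$ is $(k+1)$-orthogonal to $X$ exactly when $h_X\in\{k,\ldots,r-k-1\}$ if $\widetilde T_e=X_e$, or $h_X\in\{k+1,\ldots,r-k\}$ if $\widetilde T_e=-X_e$. Hence a single sign for $\widetilde T_e$ succeeds for all circuits through $e$ provided the $h_X$ values fit into one of these two intervals. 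I plan to secure this by a judicious choice of the element $e$---for instance, one spanning an appropriate cocircuit of $\M$---together with a choice among the (generally several) $(k+1)$-orthogonal topes of $\M\setminus e$ produced by the inductive hypothesis.

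The main obstacle is precisely this lifting step: new circuits of $\M$ through $e$ impose additional orthogonality constraints that a naive extension may violate, and there is no a priori guarantee that the freedom in choosing $e$ and $T$ suffices to meet them simultaneously. The one-element gap between the bound $r-1+\lfloor(r-1)/k\rfloor$ proved here and the realizable bound $r+\lfloor(r-1)/k\rfloor$ of García-Colín and Larman should be read as the price of this obstruction: in the realizable setting generic single-element extensions are always available, providing the extra room needed to arrange the $h_X$ values, whereas for arbitrary oriented matroids uniform extensions are not guaranteed and one loses a unit of slack.
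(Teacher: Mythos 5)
Your proposal has a genuine gap, and you acknowledge it yourself in the final two paragraphs: the lifting step in the inductive argument is never proven. After deleting an element $e$ and invoking induction to get a $(k+1)$-orthogonal tope $T$ of $\M\setminus e$, you need to choose a sign $\widetilde T_e$ so that the extension is $(k+1)$-orthogonal to every circuit of $\M$ through $e$. But since $\underline{X}\setminus e$ has size $r$, it is a basis of the uniform matroid $\M\setminus e$, not a circuit; so the $(k+1)$-orthogonality of $T$ in $\M\setminus e$ imposes no constraint whatsoever on $h_X=|H(X\setminus e,T)|$, and these quantities can a priori lie outside both admissible intervals $\{k,\ldots,r-k-1\}$ and $\{k+1,\ldots,r-k\}$ for different circuits through $e$. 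The suggestion to ``choose $e$ judiciously'' and to pick among the several $(k+1)$-orthogonal topes of $\M\setminus e$ is a hope, not an argument: nothing rules out a uniform oriented matroid in which every deletion and every $(k+1)$-orthogonal tope of the deletion runs into a bad circuit through the deleted element. Single-element extension of a tope preserving neighborliness is a genuinely hard step, and the proposal does not address it.

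The paper avoids the extension problem entirely. It gives a direct, non-inductive construction: for a uniform rank-$r$ oriented matroid on $n=r-1+\lfloor\frac{r-1}{k}\rfloor$ elements, every cocircuit has support size $n-r+1=\lfloor\frac{r-1}{k}\rfloor$, and the inequality $n\ge (k+1)\lfloor\frac{r-1}{k}\rfloor$ (equivalent to $r-1\ge k\lfloor\frac{r-1}{k}\rfloor$) guarantees $k+1$ pairwise disjoint cocircuits $C_1,\ldots,C_{k+1}$. One then reorients so that all of $C_1,\ldots,C_{k+1}$ become positive cocircuits; any set $S$ with $|S|\le k$ is disjoint from some $\underline{C_i}$ by pigeonhole, hence is a face, and the reorientation is $k$-neighborly. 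This replaces the uncontrolled constraint-satisfaction problem at the heart of your lifting step with a single combinatorial observation about cocircuit sizes. That idea is what your proposal is missing, and it is the whole content of the theorem.
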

\begin{proof}
Let  $\mathcal{M}=(E, \mathcal{C}^*)$ be a uniform rank $r$ oriented matroid on $n=r-1+\lfloor\frac{r-1}{k}\rfloor$ elements with set of cocircuits $\mathcal{C}^*$. First notice that  $|\underline{C}|=\lfloor\frac{r-1}{k}\rfloor$ for every cocircuit $C\in \mathcal{C}^*$ and since $n\geq (k+1)\lfloor \dfrac{r-1}{k}\rfloor$, there exist $k+1$ cocircuits $C_{1},\ldots, C_{k+1}\in \mathcal{C}^*$ mutually disjoint. Let $R$ be the set of elements $x\in\bigcup_{i=1}^{k+1} \underline{C_{i}}$ such that $x\in \underline{C_{i}^{-}}$ for some $i\in \{1,\ldots, k+1\}$ and consider $_{-R}\mathcal{M}$, the oriented matroid resulting from reorienting the set $R$. We will see that $_{-R}\mathcal{M}$ is a $k$-neighborly oriented matroid. Let $S\subseteq E$ be any set  of size at most $k$ and observe that $S\cap \underline{C_i}=\emptyset$ for some $i\in\{1,\ldots,k+1\}$. Then, $S\subseteq E\setminus\underline{C_i}$, where $\underline{C_i}$ is the support of a positive cocircuit in $_{-R}\mathcal{M}$, concluding  that $S$ is a face of $_{-R}\mathcal{M}$. Therefore, $_{-R}\mathcal{M}$ is a $k$-neighborly  by Lemma \ref{lem:faces} and the theorem holds.
\end{proof}



The next result improves \Cref{cotainf1v(rk)} in  some cases.

\begin{theorem}\label{cotainf3_v(rk)}
 For every  $k=2,\ldots,\lfloor\frac{r-1}{2}\rfloor$, we have  $r+\lfloor\frac{r-1}{k}\rfloor \le \nu(r,k)$
 if  $r-1\equiv \beta \bmod k$, where $\beta=\lceil \frac{k-1}{2}\rceil,\ldots,k-1$,
\end{theorem}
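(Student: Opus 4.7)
The plan is to adapt the construction from Theorem~\ref{cotainf1v(rk)}, which cannot be applied verbatim because of the additional element. Writing $r-1 = qk + \beta$ with $\lceil(k-1)/2\rceil \leq \beta \leq k-1$ and $n = r + q$, cocircuits of $\mathcal{M}$ have support of size $q+1$, and a direct calculation gives $(k+1)(q+1) = n + (k-\beta) > n$. Consequently one cannot select $k+1$ pairwise disjoint cocircuit supports as in the earlier theorem. The key observation is that the hypothesis $\beta \geq \lceil(k-1)/2\rceil$ is equivalent to $2\beta + 1 - k \geq 0$, and this inequality is exactly what permits a ``near-disjoint'' partition of $E$: one splits $E$ into $a := 2\beta + 1 - k$ pairwise disjoint \emph{orphan} blocks of size $q+1$ together with $b := k - \beta$ \emph{buddy} blocks of size $2q+1$, each buddy block viewed as the union of two $(q+1)$-subsets sharing exactly one element. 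The counts $(q+1)a + (2q+1)b = n$ and $a + 2b = k+1$ check out.

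Since $\mathcal{M}$ is uniform, every $(q+1)$-subset of $E$ is the support of a cocircuit. From the partition I would select $k+1$ cocircuits (one per orphan, two per buddy block) and choose $R \subseteq E$ so that each becomes positive in $_{-R}\mathcal{M}$. The constraints imposed on $R$ are compatible: every element of $E$ lies in at most two of the chosen supports, and the two cocircuits of a buddy pair can have their orientations fixed coherently at the shared element (the sign of one determines the sign of the other on that element). Hence such an $R$ always exists.

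To check that $_{-R}\mathcal{M}$ is $k$-neighborly one must verify that every $k$-subset $S \subseteq E$ is a face. When $S$ is disjoint from some chosen $\underline{C_i}$, the argument of Theorem~\ref{cotainf1v(rk)} applies verbatim: $S$ is contained in the $(r-2)$-simplex facet $E \setminus \underline{C_i}$, hence is a face. The residual ``hard'' case is when $S$ meets every one of the $k+1$ chosen cocircuits; a pigeonhole computation shows this forces $S$ to contain all $b$ shared buddy elements together with one private element per orphan, using $\beta + 1 \leq k$ obligatory elements. For these subsets, the plan is to express $E \setminus S$ as a union of supports of positive cocircuits of $_{-R}\mathcal{M}$, combining fragments of the chosen $k+1$ cocircuits with additional positive cocircuits induced by $R$; the orphan/buddy structure together with the slack coming from $\beta \geq \lceil(k-1)/2\rceil$ leaves just enough room to produce such a covering. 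The main obstacle is this final step, which has to be carried out uniformly across every admissible $\mathcal{M}$, and the precise role of the hypothesis on $\beta$ is to guarantee that this covering always exists.
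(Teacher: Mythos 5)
Your construction departs substantially from the paper's, and the final step is not carried out. The paper builds $\lceil\frac{k+1}{2}\rceil$ pairwise disjoint sets $D_i$ (each $D_i=A_{2i-1}\cup A_{2i}\cup\{b_i\}$ of size $2\alpha+1$, plus possibly one last block of size $\alpha+1$ when $k$ is even, where $\alpha=\lfloor\frac{r-1}{k}\rfloor$), contracts $E\setminus D_i$ to get a uniform rank-$(\alpha+1)$ oriented matroid $\M_{D_i}$ on $2\alpha+1$ elements, and invokes the known $k=1$ bound $\nu(\alpha+1,1)\geq 2\alpha+1$ to reorient each $\M_{D_i}$ into a matroid polytope. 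A $k$-set $S$ then either has at most one element in some $D_i$, in which case the $1$-neighborliness of $_{-R_i}\M_{D_i}$ supplies a positive cocircuit whose support avoids $S$; or it has at least two elements in every $D_i$, which by counting forces $k$ to be even, $|S\cap\bigcup D_i|=k$, and $S$ disjoint from the support of the last block's cocircuit. The role of $\beta\geq\lceil\frac{k-1}{2}\rceil$ is simply to provide $|B|=\beta+1\geq\lceil\frac{k+1}{2}\rceil$ elements for the $b_i$'s.

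Your proposal has a concrete flaw in the pigeonhole claim. You assert that a $k$-set $S$ meeting all $k+1$ chosen cocircuits "must contain all $b$ shared buddy elements." This is false: take $k=4$, $\beta=2$ (so $a=1$, $b=2$), $q=2$ (hence $r=11$, $n=13$); with one orphan $O$ of size $3$ and two buddy blocks having shared elements $s_1,s_2$, the $4$-set $S=\{o,s_1,c,c'\}$ with $o\in O$, $c\in C_{2,1}\setminus\{s_2\}$, $c'\in C_{2,2}\setminus\{s_2\}$ meets all five cocircuits but omits $s_2$. More importantly, the concluding step --- writing $E\setminus S$ as a union of supports of positive cocircuits of $_{-R}\M$ --- is not established and is not clear how to establish: the only cocircuits you know to be positive after reorienting by $R$ are the $k+1$ chosen ones, and in the hard case every one of them has support meeting $S$, so none is usable directly. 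You have no other supply of positive cocircuits. You acknowledge this ("the main obstacle is this final step"), so as written this is a plan rather than a proof. The mechanism the paper uses precisely to dodge this obstacle --- contracting to smaller uniform matroids and borrowing the rich family of positive cocircuits guaranteed by $1$-neighborliness inside each block --- is the missing ingredient in your sketch.
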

\begin{proof}
Let $\mathcal{M}=(E,\mathcal{C^{*}})$ be a uniform rank $r$ oriented matroid on $n=|E|=r+\lfloor\frac{r-1}{k}\rfloor$ elements with set of cocircuits $\mathcal{C}^{*}$ and notice that  $|\underline{C}|=\lfloor\frac{r-1}{k}\rfloor+1$ for every cocircuit $C\in \mathcal{C}^*$. As $r-1=\alpha k + \beta$ for some positive integer $\alpha$, then  $\alpha=\lfloor\frac{r-1}{k}\rfloor$ and so, $n=(k+1)\alpha+\beta+1$.
\medskip

Next, we will consider a partition of $E$ into two sets, $A$ and $B$, as follows. Consider any set   $B= \{b_1,\ldots,b_{\beta+1}\}$ of cardinality $\beta+1$ and let $A=\bigcup_{i=1}^{k+1}A_i=E\setminus B$, where $|A_i|=\alpha$ and $A_i\cap A_j=\emptyset$ for every distinct $i,j\in \{1,\ldots, k+1\}$. Notice that $|B|\geq\lceil \frac{k+1}{2}\rceil$ since by hypothesis $\beta\geq \lceil \frac{k-1}{2}\rceil$. Thus, for every $i=1,\ldots,\lfloor\frac{k+1}{2}\rfloor$ we will consider the sets $$D_i=A_{2i-1}\cup A_{2i}\cup b_i$$ of cardinality $2\alpha +1$ and if $k$ is even, we will also consider the set $D_{\frac{k+2}{2}}= A_{k+1}\cup b_{\frac{k+2}{2}}$,  of cardinality $\alpha+1$ (see Figure \ref{construcionDi}).

\begin{figure}[htb]
\begin{center}
 \includegraphics[width=.5 \textwidth]{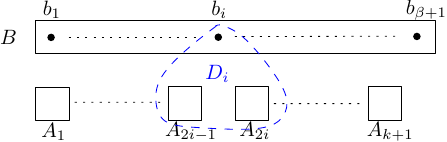}
\label{construcionDi}
\end{center}
\end{figure}

First observe  that $D_i\cap D_j = \emptyset$  for every distinct $i,j\in \{1,\ldots,\lceil\frac{k+1}{2}\rceil \}$.
Now, for each $i\in \{1,\ldots,\lfloor\frac{k+1}{2}\rfloor\}$ consider $\mathcal{M}/(E\setminus D_i)$, the contraction of $E\setminus D_i$ from $\mathcal{M}$, which we will denote by $\mathcal{M}_{D_i}$. 
So, $\mathcal{M}_{D_i}$ is a uniform oriented matroid with ground set $D_i$ and rank $r_i=\max\{0,r-|E\setminus D_i|\}$ (see Section \ref{intro} for the definition of a contraction of a uniform oriented matroid). As $n=(k+1)\alpha+\beta+1$ and $r=\alpha k +\beta+1$, it follows that $r_i=r-|E\setminus D_i|=r-n+2\alpha +1=\alpha+1$. Hence, the cocircuits of $\mathcal{M}_{D_i}$ have
cardinality $|D_i|-r_i+1=|D_i|-(\alpha+1)+1=\alpha+1$ and so, the cocircuits of  $\mathcal{M}_{D_i}$ are also cocircuits of $\mathcal{M}$.  On the other hand, $\mathcal{M}_{D_i}$ has order $2\alpha+1=2r_i-1$
and since $2r_i-1\le \nu(r_i,1)$ (see \cite{CS85}), there exists a reorientation of $\mathcal{M}_{D_i}$ on $R_i\subseteq D_i$ such that $_{-R_i}\mathcal{M}_{D_i}$
is a $1$-neighborly matroid polytope.

 Now, consider the set
\begin{equation*}
    R = \left\{
	       \begin{array}{ll}
		 \bigcup\limits_{i=1}^{\lfloor\frac{k+1}{2}\rfloor}R_i    & if\  k\ is\ odd,  \\
		 \bigcup\limits_{i=1}^{\lfloor\frac{k+1}{2}\rfloor}R_i\cup \underline{F}^-   & if\  k\ is\ even, \\
	       \end{array}
	     \right.
\end{equation*}
where $F$  denotes the cocircuit of $\mathcal{M}$ with support $\underline{F}=D_{\frac{k+2}{2}}$. Notice that $R_i\cap R_j=\emptyset$ for every distinct $i,j=1,\ldots,\lfloor\frac{k+1}{2}\rfloor$ and moreover,  if $k$ is even then also
   $R_i\cap \underline{F}^-=\emptyset$  for every  $i=1,\ldots,\lfloor\frac{k+1}{2}\rfloor$. We will see that $_{-R}\mathcal{M}$ is a $k$-neighborly oriented matroid.
Let $S\subset E$ be any set  of size at most $k$ and first suppose that $|S\cap D_i|\leq 1$ for some $i \in \{1,\ldots,\lfloor\frac{k+1}{2}\rfloor\}$. As
$_{-R_i}\mathcal{M}_{D_i}$ is $1$-neighborly, then there exists a positive cocircuit $C$ of $_{-R_i}\mathcal{M}_{D_i}$ such that $\underline{C}\cap S=\emptyset$. Then,  $S$ is a face of $_{-R}\mathcal{M}$ since  $C$ is also a positive cocircuit of  $_{-R}\mathcal{M}$, concluding that $_{-R}\mathcal{M}$ is  $k$-neighborly  by Lemma \ref{lem:faces}. Now suppose that $|S\cap D_i|\ge 2$ for every $i=1,\ldots,\lfloor\frac{k+1}{2}\rfloor$ and let denote $D=\cup_{i=1}^{\lfloor\frac{k+1}{2}\rfloor}D_i$. Then, $|S\cap D|\ge 2\lfloor\frac{k+1}{2}\rfloor$ concluding that  $k$ is even since $|S|=k$. Moreover, as $|S\cap D|=k$, then $|S\cap \underline{F}| = 0$, concluding that $S$ is a face of $_{-R}\mathcal{M}$ since $\underline{F}$ is the support of a positive cocircuit of $_{-R}\mathcal{M}$. Therefore, $_{-R}\mathcal{M}$ is  $k$-neighborly  by Lemma \ref{lem:faces} and the theorem holds.
\end{proof}

\section{Results on the  $k$-Roudneff conjecture}\label{sec:Roudneff}

\subsection{Orthogonality and neighborliness}\label{section_orthogonality}
The main result of this subsection is to present an equivalent description of neighborliness in terms of orthogonality. Throughout this section and Section \ref{computer}, several of our results and proofs are stated in terms of  $k'$-orthogonality which is equivalent to $k$-neighborliness, with $k=k'-1$.

Given two sign-vectors $X,Y\in \{+,-,0\}^E$,  their \emph{separation} is the set $S(X,Y)=\{e\in E\mid X_e\cdot Y_e=-\}$. For convenience, we denote by $H(X,Y)=\{e\in E\mid X_e\cdot Y_e=+\}$. We define the \emph{orthogonality} of $X$ and $Y$ by $$X\perp Y=\min\{|H(X,Y)|, |S(X,Y)|\}.$$
We say that $X,Y$ are \emph{$k$-orthogonal} if $X\perp Y\geq k$. 
For an oriented matroid  $\M=(E,\mathcal{C})$, we call a sign-vector  $T\in \{+,-\}^E$, \emph{$k$-orthogonal} (to $\mathcal{M}$) if $X\perp T\geq k$ for all $X\in \mathcal{C}$. Note that for $X\in\{0,+,-\}^E$ and $T\in\{+,-\}^E$ of $\mathcal{M}$ we have $X\perp T\geq 1$ if and only if $X$ and $T$ are orthogonal. Hence, the sign-vectors $T\in \{+,-\}^E$ that are $1$-orthogonal to $\mathcal{M}$ constitute the set of topes $\mathcal{T}$ of $\mathcal{M}$. 
The following establishes a correspondence between $k-1$-neighborly reorientations and $k$-orthogonal topes.
\begin{proposition}\label{equivalentforms}
 Let $\mathcal{M}$ be a rank $r$ oriented matroid on $n=|E|$ elements
 , $R\subseteq E$, and $T$ the sign-vector  that is negative on $R$ and positive on $E\setminus R$, and $k={1},\ldots, \lfloor\frac{r+1}{2}\rfloor$. Then, $T$ is a $k$-orthogonal tope if and only if $_{-R}\M$ is $k-1$-neighborly.
\end{proposition}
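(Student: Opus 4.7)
The plan is to translate both sides of the equivalence into a single statement about the signs of the circuits of ${}_{-R}\M$. The key identity is that for every circuit $X\in\mathcal{C}$ of $\M$, the corresponding circuit $X'$ of ${}_{-R}\M$ (obtained by flipping the signs on $R$) satisfies
\[ (X')^+ = (X^+\setminus R)\cup(X^-\cap R) = H(X,T), \qquad (X')^- = (X^+\cap R)\cup(X^-\setminus R) = S(X,T), \]
so that $X\perp T = \min(|(X')^+|,|(X')^-|)$. Under this dictionary, $T$ is $k$-orthogonal precisely when every circuit of ${}_{-R}\M$ has at least $k$ positive and $k$ negative entries. Since $k\geq 1$, $k$-orthogonality implies $1$-orthogonality, which by the remark preceding the proposition forces $T\in\{+,-\}^E$ to be a tope; so the ``tope'' qualifier comes for free.

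It therefore suffices to prove: ${}_{-R}\M$ is $(k-1)$-neighborly if and only if every circuit $X'$ of ${}_{-R}\M$ satisfies $\min(|(X')^+|,|(X')^-|)\geq k$. For the backward direction, take any $F\subseteq E$ with $|F|\leq k-1$ and any circuit $X'$ of ${}_{-R}\M$. The corresponding circuit of ${}_{-F}({}_{-R}\M)={}_{-(R\triangle F)}\M$ has positive part of size at least $|(X')^+|-|F|\geq k-(k-1)=1$, and negative part of size at least $1$ by the symmetric estimate; so every such reorientation is acyclic, giving $(k-1)$-neighborliness. For the forward direction, if some circuit $X'$ of ${}_{-R}\M$ has, say, $|(X')^-|\leq k-1$ (the case $|(X')^+|\leq k-1$ is symmetric), then choosing $F:=(X')^-$ turns $X'$ into an all-positive sign-vector on its support inside ${}_{-F}({}_{-R}\M)$, contradicting the acyclicity required by $(k-1)$-neighborliness.

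I do not anticipate a genuine obstacle here: the argument is a direct unpacking of the definitions. The only places where care is required are the sign-flipping identity in the first paragraph (where one must verify that reorienting a circuit on $R$ exchanges its positive and negative supports in the expected way) and the composition of two successive reorientations, handled transparently by the identity ${}_{-F}({}_{-R}\M)={}_{-(R\triangle F)}\M$.
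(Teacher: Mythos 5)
Your proof is correct and follows essentially the same route as the paper's: both sides are translated into the condition that every circuit $X'$ of ${}_{-R}\M$ has at least $k$ positive and at least $k$ negative entries, using the identities $H(X,T)=(X')^+$ and $S(X,T)=(X')^-$. You additionally spell out the equivalence of that circuit condition with $(k-1)$-neighborliness via the composition ${}_{-F}({}_{-R}\M)={}_{-(R\triangle F)}\M$, a step the paper treats as essentially definitional; this is a welcome bit of extra rigor but not a different argument.
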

\begin{proof}
"$\Rightarrow$". Let $T$ be a $k$-orthogonal tope of $\mathcal{M}$, i.e.,  $H(X,T)\ge k$ and $S(X,T)\ge k$ for every circuit $X$ of $\mathcal{M}$ and denote $R={T}^-$. Now, let us consider the oriented matroid $_{-R}\mathcal{M}$. Then, we notice that $T'$, the resulting sign-vector obtained from reorienting each element of $R$ in $T$, is a sign-vector with only $+$ entries, i.e., $T'=\{+\}^E$. Let $Y$ be the resulting circuit of $_{-R}\mathcal{M}$ obtaining from reorienting a circuit $X$ of $\mathcal{M}$.
Hence,  notice that $H(X,T)=H(Y,T')=|Y^+|$ and $S(X,T)=S(Y,T')=|Y^-|$, obtaining that $|Y^+|>k-1$ and $|Y^-|>k-1$ for every circuit $Y$ of $_{-R}\mathcal{M}$. Hence, $_{-R}\mathcal{M}$ is $(k-1)$-neighborly.

\smallskip

"$\Leftarrow$". Let $R\subseteq E$ {be} such that $_{-R}\M$ is $k-1$-neighborly and $T\in\{+,-\}^E$ be such that $T^-=R$. By definition $|Y^+|>k-1$ and $|Y^-|>k-1$  for every circuit  $Y$ of $_{-R}\mathcal{M}$. Let $T'$ be the tope of  $_{-R}\mathcal{M}$ such that $T'\in\{+\}^E$, which exists because $_{-R}\mathcal{M}$ is acyclic such that
$T$ is obtained by reoriented each element of $R$ in $T'$.  For every circuit  $Y$ of $_{-R}\mathcal{M}$, let $X$ be the resulting circuit of $\mathcal{M}$ obtaining from reorienting each element of $R'$ in $Y$.
Hence,   notice that $H(Y,T')=|Y^+|=H(X,T)$ and $S(Y,T')=|Y^-|=S(X,T)$, obtaining that $H(X,T)\ge k$ and $S(X,T)\ge k$ for every circuit $X$ of $\mathcal{M}$. Therefore $T$ is a $k$-orthogonal tope of $\mathcal{M}$.
\end{proof}


We say that $T$ is a {\em $k$-neighborly tope} of $\mathcal{M}$ if the  oriented matroid $_{-T^-}\M$ (obtained from reorienting such that $T$ becomes all positive) is $k$-neighborly. By \Cref{equivalentforms}, $k$-neighborly topes are in correspondence with the  $(k+1)$-orthogonal topes. Given an oriented matroid $\M$ with tope set $\mathcal{T}$ and $T\in \mathcal{T}$ denote
$\mathcal{M}^{\perp_k}=\{T\in\mathcal{T} \text{ } | \text{ } T \text{ is } k\text{-orthogonal}\}$. 
Further denote
$\Ort(T)=\max\{k \text{ } | \text{ } T \text{ is } k\text{-orthogonal}\}$
and  $\mathcal{O}_k(\mathcal{M})=\{T\in\mathcal{T} \text{ } | \text{ } \Ort(T)=k\}$, i.e.,  $\mathcal{O}_k(\mathcal{M})=\mathcal{M}^{\perp_k}\setminus \mathcal{M}^{\perp_{k+1}}$, the set of $k$-orthogonal but not $k+1$-orthogonal topes in $\mathcal{M}$.  So, we may describe the $o$-vector of $\mathcal{M}$ (defined in Subsection \ref{intro_k_Roudf}) in terms of orthogonality. By \Cref{equivalentforms} we get the following result that will be very useful throughout this paper.
\begin{corollary}\label{relation_ort_neighb}
  For every $k=1,\ldots, \lfloor\frac{r+1}{2}\rfloor$, $m(\M,k-1)=|\mathcal{M}^{\perp_k}|$. Moreover, $o(\M,k-1)=|\mathcal{O}_{k}(\M)|$.
\end{corollary}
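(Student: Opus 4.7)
The plan is to read off both equalities directly from Proposition~\ref{equivalentforms} via the natural bijection
\[\Phi:2^E\longrightarrow\{+,-\}^E,\qquad R\longmapsto T_R,\]
where $T_R$ is the sign-vector with $T_R^-=R$ and $T_R^+=E\setminus R$. For any $k\in\{1,\ldots,\lfloor(r+1)/2\rfloor\}$, Proposition~\ref{equivalentforms} states precisely that $_{-R}\M$ is $(k-1)$-neighborly if and only if $T_R$ is a $k$-orthogonal tope of $\M$. Note that a $k$-orthogonal sign-vector with $k\ge 1$ is automatically a tope, since $1$-orthogonality to every circuit is the defining property of a tope.

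For the first equality, I restrict $\Phi$ to those $R\subseteq E$ for which $_{-R}\M$ is $(k-1)$-neighborly. By the proposition the image of this restriction is exactly $\M^{\perp_k}$, and since $\Phi$ is a bijection this immediately yields $m(\M,k-1)=|\M^{\perp_k}|$.

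For the second equality I apply Proposition~\ref{equivalentforms} at two consecutive levels $k$ and $k+1$: the reorientation $_{-R}\M$ is $(k-1)$-neighborly but not $k$-neighborly if and only if $T_R$ is $k$-orthogonal but not $(k+1)$-orthogonal, i.e.\ $T_R\in \mathcal{O}_k(\M)=\M^{\perp_k}\setminus\M^{\perp_{k+1}}$. Restricting $\Phi$ accordingly gives $o(\M,k-1)=|\mathcal{O}_k(\M)|$. At the top value $k=\lfloor(r+1)/2\rfloor$ one simply has $\M^{\perp_{k+1}}=\emptyset$ for rank reasons, matching the fact that no reorientation can be $k$-neighborly. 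There is no real obstacle here: all the content is packed into Proposition~\ref{equivalentforms}, and the corollary is a one-line translation under $\Phi$.
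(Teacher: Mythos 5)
Your proposal is correct and matches the paper's approach exactly: the paper derives this corollary directly from Proposition~\ref{equivalentforms} (the text reads ``By \Cref{equivalentforms} we get:'' with no further argument), and you have simply spelled out the underlying bijection $R\mapsto T_R$ and the two-level application at $k$ and $k+1$. The edge-case remark that $\mathcal{M}^{\perp_{k+1}}=\emptyset$ at $k=\lfloor(r+1)/2\rfloor$ is a correct and welcome clarification the paper leaves implicit.
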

Recall that the tope graph $\mathcal{G}(\M)$ of an oriented matroid $\M$ on $n$ elements naturally embeds as an induced subgraphs into the hypercube $\{+,-\}^E\cong Q_n$. The edges of $\mathcal{G}(\mathcal{M})$ are partitioned into classes, where two edges are equivalent if their corresponding adjacent topes differ in exactly the same entry $e\in E$ (see \Cref{figureB(3_5)B(3_4)} for an example and~\cite{KM20} for further reference).
In order to present a graph theoretical description of $k$-orthogonal topes for any graph $G$ with vertex $v$ and $k$ a non-negative integer, define  the \emph{ball} of radius $k$ and center $v$ in $G$, denoted by $B_k^G(v)$, as the induced subgraph of $G$ with set of vertices $V(B_k^G(v))=\{u\in V(G)\mid d_G(u,v)\leq k\}$. Here $d_G(u,v)$ denotes the distance between $u$ and $v$ in the graph $G$. Next, we have another characterization of neighborliness that we will use later.

\begin{proposition}\label{prop:topegraph}
    Let $\M$ be a rank $r$ oriented matroid on $n$ elements, $k={0},\ldots, \lfloor\frac{r-1}{2}\rfloor$, and $T\in \mathcal{T}$ a tope. Then $T$ is $k$-neighborly if and only if
 $B^{\mathcal{G}(\M)}_{k}(T)\cong B^{Q_n}_{k}(T)$.
 \end{proposition}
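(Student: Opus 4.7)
The plan is to compare both balls as induced subgraphs of the Hamming cube $Q_n$, after identifying each tope with its sign-vector in $\{+,-\}^E = V(Q_n)$. Unfolding definitions, $T$ is $k$-neighborly iff for every $R\subseteq E$ with $|R|\le k$ the sign-vector ${}_{-R}T$ is a tope of $\M$: indeed, ${}_{-T^-}\M$ is $k$-neighborly iff each ${}_{-(T^-\triangle R)}\M$ with $|R|\le k$ is acyclic, iff the all-positive sign-vector is a tope of ${}_{-(T^-\triangle R)}\M$, iff ${}_{-R}T$ is a tope of $\M$. Since $V(B^{Q_n}_k(T))=\{{}_{-R}T\mid R\subseteq E,\ |R|\le k\}$, this says $T$ is $k$-neighborly exactly when $V(B^{Q_n}_k(T))\subseteq \mathcal{T}$. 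Moreover, $\mathcal{G}(\M)$ is the induced subgraph of $Q_n$ on $\mathcal{T}$, since adjacency in either graph means ``differ in exactly one coordinate''. Consequently the tope-graph distance is always at least the Hamming distance, giving the always-valid inclusion $V(B^{\mathcal{G}(\M)}_k(T))\subseteq V(B^{Q_n}_k(T))$.

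For the forward direction, assume $T$ is $k$-neighborly and take $T'\in V(B^{Q_n}_k(T))$ with $S(T,T')=\{e_1,\ldots,e_\ell\}$ and $\ell\le k$. Flipping $e_1,\ldots,e_\ell$ one at a time yields a sequence $T=T_0,T_1,\ldots,T_\ell=T'$ in which each $T_i={}_{-\{e_1,\ldots,e_i\}}T$ is a tope (since $i\le k$) and consecutive terms differ in exactly one coordinate, hence are adjacent in $\mathcal{G}(\M)$. Therefore $d_{\mathcal{G}(\M)}(T,T')\le\ell\le k$ and $T'\in V(B^{\mathcal{G}(\M)}_k(T))$. Combined with the reverse inclusion above, this gives $V(B^{\mathcal{G}(\M)}_k(T))=V(B^{Q_n}_k(T))$. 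Since both balls are induced subgraphs of $Q_n$ on the same vertex set, they are equal as subgraphs of $Q_n$, and in particular isomorphic.

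For the converse, suppose $B^{\mathcal{G}(\M)}_k(T)\cong B^{Q_n}_k(T)$ as abstract graphs. Then both balls have $\sum_{i=0}^k\binom{n}{i}$ vertices; combined with the inclusion $V(B^{\mathcal{G}(\M)}_k(T))\subseteq V(B^{Q_n}_k(T))$ established in the first paragraph, equality of cardinalities forces equality of the vertex sets. Hence $V(B^{Q_n}_k(T))\subseteq \mathcal{T}$, and by the first paragraph $T$ is $k$-neighborly.

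The only nontrivial step is the forward direction, where one must actually produce a monotone path in $\mathcal{G}(\M)$ between $T$ and every tope at Hamming distance at most $k$; this is exactly what turns the generic vertex-set inclusion into an equality. The converse is essentially a cardinality count once the inclusion and the translation of $k$-neighborliness into a condition on sign-vectors in $V(B^{Q_n}_k(T))$ are in place.
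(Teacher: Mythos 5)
Your proof is correct and follows essentially the same route as the paper's: translate $k$-neighborliness into "every sign-vector at Hamming distance at most $k$ from $T$ is a tope," then use that $\mathcal{G}(\M)$ is the induced subgraph of $Q_n$ on $\mathcal{T}$. You fill in two steps the paper compresses into a single sentence — the coordinate-by-coordinate path showing tope-graph distance equals Hamming distance inside the ball, and the cardinality-plus-inclusion argument turning an abstract ball isomorphism back into vertex-set equality — which is a welcome clarification rather than a different argument.
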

\begin{proof}
$T$ is $k$-neighborly if and only if reorienting any $F\subseteq E$ of size at most $k$ in $_{-T^-}\M$ is acyclic. Direct computation yields that this is equivalent to any reorientation of $_{-F}T$ being at least $1$-orthogonal. Hence this changing any set of at most $k$ coordinates in $T$ yields another vertex of $\mathcal{G}(\M)$. Since  $\mathcal{G}(\M)$ is an induced subgraph of $Q_n$ this is equivalent to $B^{\mathcal{G}(\M)}_{k}(T)\cong B^{Q_n}_{k}(T)$.
\end{proof}



\subsection{The $o$-vector of $\mathcal{C}_r(n)$ and its tope graph}\label{section_cyclic}
The main result of this subsection is to obtain the $o$-vector of
$\mathcal{C}_r(n)$, for $n\ge 2(r-k)+1$ (\Cref{O-formula_neighb}).

\smallskip

Let $n$ be a positive integer. Given a sign-vector $T\in \{+,-\}^n$, notice that $T$ partitions  $\{1,\ldots,n\}$ into signed  \emph{blocks}  $B_1,\ldots, B_m$, 
 $1\le m\le n$, where $B_j$ denotes the $j$-th maximal set of consecutive elements 
 having the same sign. Sign-vectors  with $m$ blocks can be easily counted as there are $m-1$ possibilities to choose the change of sign in $n-1$ places. So, the number of sign-vectors with $m$ blocks is 2$\binom{n-1}{m-1}$ since for every sign-vector $T$ there exists also $-T$.

\begin{remark}\label{numberBlocks} The number of sign-vectors of $n$ elements with $m$ blocks is $2\binom{n-1}{m-1}$.
\end{remark}

Throughout this subsection, we will denote by $\mathcal{T}$ and  $\mathcal{C}$ the set of topes and circuits of $\mathcal{C}_r(n)$, respectively. Given a tope  $T\in \mathcal{T}$  with $m$ blocks, we denote $O(m)=\lceil\frac{r+1-m}{2}\rceil$. Next, we prove that $O(m)$ is the minimum orthogonality that $T$ can have.

\begin{lemma}\label{minim_ort} Let $T\in \mathcal{T}$ be with $m$ blocks, then $\Ort(T)\ge O(m)$. 
\end{lemma}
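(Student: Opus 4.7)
The plan is to bound $X\perp T$ uniformly over all circuits $X\in\mathcal{C}$ by playing the alternating structure of $\mathcal{C}_r(n)$ against the block structure of $T$. Fix a circuit $X$ with support $\underline{X}=\{i_1<i_2<\cdots<i_{r+1}\}$; by definition of $\mathcal{C}_r(n)$ the signs $X_{i_1},X_{i_2},\ldots,X_{i_{r+1}}$ strictly alternate. I would then form the auxiliary product sign sequence $Y=(Y_1,\ldots,Y_{r+1})$ defined by $Y_j:=X_{i_j}T_{i_j}\in\{+,-\}$. By definition of $H$ and $S$, one has $|Y^+|=|H(X,T)|$ and $|Y^-|=|S(X,T)|$, so the goal reduces to showing $\min(|Y^+|,|Y^-|)\ge O(m)$.

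The next step is to count the sign changes of $Y$. Because $X_{i_j}\neq X_{i_{j+1}}$ at every step, one has $Y_j\neq Y_{j+1}$ precisely when $T_{i_j}=T_{i_{j+1}}$. Now $(T_{i_1},\ldots,T_{i_{r+1}})$ is a subsequence of the $n$-entry sign vector $T$, which has $m$ blocks; hence this subsequence also has at most $m$ blocks, giving at most $m-1$ indices $j\in\{1,\ldots,r\}$ with $T_{i_j}\neq T_{i_{j+1}}$. Therefore $Y$ has at least $r-(m-1)=r-m+1$ sign flips and thus splits into at least $r-m+2$ alternating nonempty blocks.

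Finally, since the blocks of $Y$ alternate in sign and each is nonempty, at least $\lfloor(r-m+2)/2\rfloor$ of the $r+1$ entries of $Y$ carry each sign. A routine floor/ceiling identity gives $\lfloor(r-m+2)/2\rfloor=\lceil(r+1-m)/2\rceil=O(m)$, so $X\perp T\ge O(m)$. As $X$ was arbitrary, $\Ort(T)\ge O(m)$. There is no real obstacle here beyond bookkeeping: the whole lemma reduces to the observation that each block change of $T$ landing inside the support of a circuit destroys exactly one forced sign change of the product sequence $Y$, and the alternation of $X$ guarantees that the remaining forced flips are enough to spread $Y$ across both signs $O(m)$ times.
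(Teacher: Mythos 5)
Your proof is correct and uses the same underlying idea as the paper: the alternation of the circuit $X$ forces the product $X\cdot T$ to change sign at every consecutive pair of indices in $\underline{X}$ except where $T$ itself changes sign, and $T$ can change sign at most $m-1$ times. The paper organizes this as a per-block bound $\min\{|S(X,T)\cap B_i|,|H(X,T)\cap B_i|\}\ge\lceil(|\underline{X}\cap B_i|-1)/2\rceil$ summed over $i$, whereas you count sign flips of the product sequence globally and then read off $\lfloor(r-m+2)/2\rfloor=\lceil(r+1-m)/2\rceil$; the two are equivalent bookkeepings of the same argument.
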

\begin{proof}
Let $B_1,B_2,\ldots,B_m$
be the blocks of $T$ and let  $X\in \mathcal{C}$. As $X$  is alternating, we have that $\min\{|S(X,T)\cap B_i|,|H(X,T)\cap B_i|\}\ge\lceil\frac{|\underline{X}\cap B_i|-1}{2}\rceil$ for every $i=1,2,\ldots,m$.
Thus, $X\perp T=\min\{|H(X,T)|, |S(X,T)|\}\ge\sum\limits_{i=1}^{m}\lceil\frac{|X\cap B_i|-1}{2}\rceil\ge\lceil\frac{r+1-m}{2}\rceil=O(m)$ for every $X\in\mathcal{C}$,
concluding that $T$ is $O(m)$-orthogonal. Therefore, as $\Ort(T)=\max\{k \text{ }|\text{ } T \text{ is } k-\text{orthogonal}\}$, it follows that $\Ort(T)\ge O(m)$, concluding the proof.
\end{proof}

\begin{lemma}\label{B_rn=cyclicpoly}
Let $n\ge r+1\ge2$,
then $T\in \mathcal{T}$ if and only if $T$ has at most $r$ blocks. 
\end{lemma}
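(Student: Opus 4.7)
The plan is to translate ``$T$ is a tope'' into a purely combinatorial condition on sign-patterns of subsequences, and then relate alternating subsequences to blocks. Since $T\in\{+,-\}^n$ has full support, and every circuit $X\in\mathcal{C}$ of $\mathcal{C}_r(n)$ is alternating on its support $\underline{X}=\{i_1<\cdots<i_{r+1}\}$, the pair $T,X$ fails to be orthogonal precisely when $T_{i_j}X_{i_j}$ has the same sign for all $j$, which, because $X$ alternates, happens exactly when $T_{i_1},T_{i_2},\ldots,T_{i_{r+1}}$ itself alternates. Therefore the lemma reduces to the equivalence: $T$ admits an alternating subsequence of length $r+1$ if and only if $T$ has at least $r+1$ blocks.

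For the ``many blocks implies alternating subsequence'' direction, I would use that by definition the block-signs $s_1,s_2,\ldots,s_m$ themselves alternate. If $m\geq r+1$, picking an arbitrary $i_j\in B_j$ for $j=1,\ldots,r+1$ yields an alternating subsequence of length $r+1$, and choosing $X\in\mathcal{C}$ with $\underline{X}=\{i_1,\ldots,i_{r+1}\}$ and signs matching $T$ on this support shows $T$ is not orthogonal to $X$, hence is not a tope.

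For the converse, assume $T$ has $m\leq r$ blocks and let $(T_{i_1},\ldots,T_{i_\ell})$ be any alternating subsequence of $T$. Define $\phi(j)$ to be the index of the block containing $i_j$. Since $i_1<\cdots<i_\ell$, the map $\phi$ is nondecreasing. Moreover, a block is a maximal run of equal signs, so $\phi(j)=\phi(j+1)$ would force $T_{i_j}=T_{i_{j+1}}$, contradicting alternation. Hence $\phi$ is strictly increasing, giving $\ell\leq m\leq r<r+1$. Thus no alternating subsequence of length $r+1$ exists, i.e.\ $T$ is orthogonal to every circuit of $\mathcal{C}_r(n)$, so $T\in\mathcal{T}$.

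The only nontrivial step is the first paragraph's reformulation (exploiting that $X$ is alternating and $T$ has full support to reduce orthogonality to a subsequence condition); the remainder is a direct counting argument via the nondecreasing block-map $\phi$, so I do not anticipate any real obstacle.
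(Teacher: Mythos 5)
Your proof is correct and takes essentially the same approach as the paper: both hinge on the observation that a full-support sign vector is a tope iff it is $1$-orthogonal, and that since circuits of $\mathcal{C}_r(n)$ are alternating, non-orthogonality to a circuit amounts to $T$ containing an alternating subsequence of length $r+1$, which is equivalent to having $\geq r+1$ blocks. The paper states this as a one-line ``it follows''; your write-up simply fills in that reasoning (including the correct use of uniformity to realize any $(r+1)$-subset as a circuit support, and the strictly-increasing block-map argument for the converse).
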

\begin{proof}
Recall that $T\in \mathcal{T}$ if and only if $T$ is $1$-orthogonal. On the other hand, as every circuit $X\in \mathcal{C}$ is alternating, it follows that   $T$ is $1$-orthogonal if and only if  $T$ has at most $r$ blocks. 
\end{proof}
For  $n\ge r+1\ge3$ we define the \emph{graph of blocks}, denoted by $B(r,n)$,  as the graph whose vertices are the sign-vectors $T\in \{+,-\}^n$ with at most $r$ blocks and two sign-vectors $T,T'$ are adjacent if $|S(T,T')|=1$ (see \Cref{figureB(3_5)B(3_4)}).
Thus, the adjacency of the vertices in the graph $B(r,n)$ corresponds to the adjacency of topes in $\mathcal{G}(\mathcal{C}_r(n))$ and so, it follows from \Cref{B_rn=cyclicpoly} that $B(r,n)$ is just the tope graph of $\mathcal{C}_r(n)$.
\smallskip

We say that a block is an \emph{even block} (\emph{odd block}) if it has an even (odd) number of elements. From now, we will denote  by $B_1,B_2,\ldots,B_m$  the blocks of a tope $T\in \mathcal{T}$. Denote by $\mathcal{B}_e$  and $\mathcal{B}_o$  the set of even and odd blocks of $T$, respectively. We will also denote $\mathbf{B}_e =|\mathcal{B}_e|$ and $\mathbf{B}_o=|\mathcal{B}_o|$.
Given a tope $T$ and a circuit $X$, let denote $S^X=\{i \text{ } | \text{ } \min\{|S(X,T)\cap B_i|, |H(X,T)\cap B_i|\}=|S(X,T)\cap B_i|\}$ and $H^X=\{i \text{ } | \text{ } \min\{|S(X,T)\cap B_i|, |H(X,T)\cap B_i|\}=|H(X,T)\cap B_i|\},$
see \Cref{remarkfigure}.

\newpage
\begin{figure}[htb]
\begin{center}
 \includegraphics[width=1\textwidth]{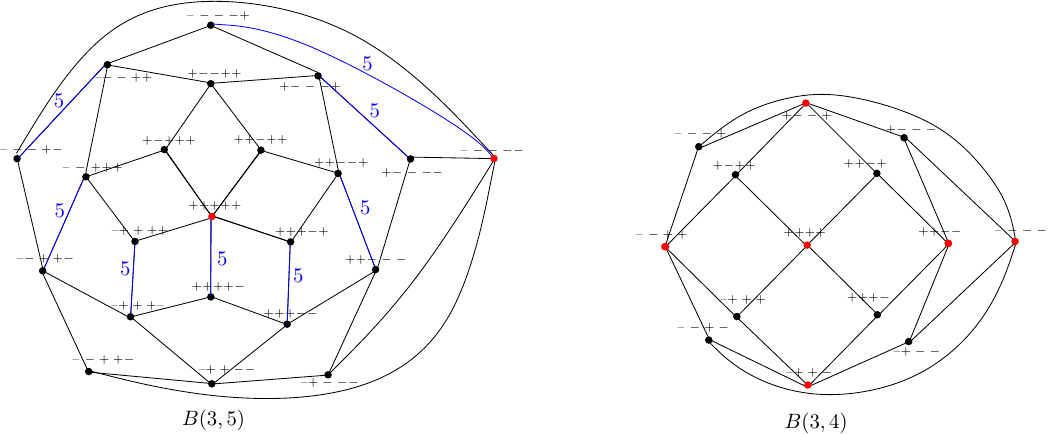}
 \caption{The graphs $B(3,5)$ and $B(3,4)$ corresponding to the tope graphs of $\mathcal{C}_3(5)$ and $\mathcal{C}_3(4)$, respectively. Red vertices correspond to $2$-orthogonal topes (i.e., $1$-neighborly topes) and adjacent topes between a blue edge, 
 differ exactly in the fifth entry.} \label{figureB(3_5)B(3_4)}
 \end{center}
\end{figure}
The following observation can be deduced by a simple
parity argument and will be very useful for the next lemmas.
\begin{remark}\label{blocks_even-odd}
Let  $T\in \mathcal{T}$  and $X\in \mathcal{C}$. Let $j<j'$ and suppose that $|\underline{X}\cap B_j|$ and $|\underline{X}\cap B_{j'}|$ are odd, and that $|\underline{X}\cap B_i|$ is even for every $i\in\{j+1,\ldots,j'-1\}$. Then, $$|\{j+1,\ldots,j'-1\}|=j'-j-1 \text{ is even  if and only if } j,j'\in S^X \text{ or } j,j'\in H^X.$$
\end{remark} 

The above remark holds since the blocks and the circuits are alternating. In the first example of \Cref{remarkfigure} we observe that if $j'=j+2$, then $j'-j-1=1$ is odd and so,  $j\in S^X\setminus H^X$ and $j'\in H^X\setminus S^X$ by \Cref{blocks_even-odd}. In the second example, we notice that $j'-j-1=0$ is even if $j'=j+1$
and so, $j,j'\in S^{X}$. Moreover, applying \Cref{blocks_even-odd} now to the blocks $j+1$ and $j+2$ of the second example, it follows that $j+1,j+2\in S^{X}$.
 \begin{figure}[htb]
\begin{center}
\includegraphics[width=0.9\textwidth]{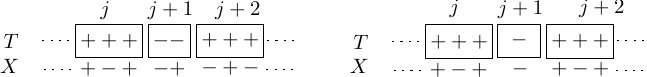}
 \caption{In the first example $j\in S^X\setminus H^X$, $j+1\in S^X\cap H^X$ and $j+2\in H^X\setminus S^X$. In the second example, $j,j+1,j+2\in S^{X}\setminus H^X$.} \label{remarkfigure}
 \end{center}
\end{figure}

The following lemmas provide a connection between blocks and orthogonality that will allow us to prove \Cref{O-formula_neighb}.
\begin{lemma} \label{lemmageneral:blocks-ortogonality}
Let $T\in \mathcal{T}$ be with $m$ blocks. If $r+1\le n-\mathbf{B}_e$, then
 $\Ort(T)=O(m).$
\end{lemma}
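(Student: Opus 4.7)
The inequality $\Ort(T)\ge O(m)$ is already provided by \Cref{minim_ort}, so the task is to exhibit a single circuit $X\in\mathcal{C}$ with $X\perp T=O(m)$. Since a circuit of $\mathcal{C}_r(n)$ is determined up to sign by its $(r+1)$-element support, I parametrize candidates by the block distribution $c_i:=|\underline{X}\cap B_i|$, subject to $\sum c_i=r+1$ and $0\le c_i\le|B_i|$.

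Set $\Delta=|S(X,T)|-|H(X,T)|$, so that $\min\{|S(X,T)|,|H(X,T)|\}=(r+1-|\Delta|)/2$ and the goal becomes maximizing $|\Delta|$. Since $X$ alternates globally while $T$ is constant on each block, the block $B_i$ contributes $0$ to $\Delta$ when $c_i$ is even and $\pm 1$ when $c_i$ is odd. Encoding this contribution sign as $\alpha_i\in\{\pm 1\}$ and using alternation of both $X$ and the block signs $\tau_i$ yields the recursion $\alpha_{i+1}=(-1)^{c_i-1}\alpha_i$, valid even when $c_i=0$ (this is essentially the content of \Cref{blocks_even-odd}). Listing the odd-$c_i$ positions as $p_1<\cdots<p_o$, one obtains $\alpha_{p_{j+1}}=(-1)^{p_{j+1}-p_j-1}\alpha_{p_j}$, whence $|\Delta|\le o$, with equality achievable whenever the $\alpha_{p_j}$ share a sign---most simply when the odd-$c_i$ blocks form the initial segment $\{1,\dots,o\}$.

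A direct count shows that $r+1-2\,O(m)$ equals $m$ when $r+1-m$ is even (Case A) and $m-1$ when $r+1-m$ is odd (Case B), so the target is $o=m$ in the first case and $o=m-1$ (with the single non-odd $c_i$ placed at position $1$) in the second. Case A is immediate: take all $c_i$ odd, which is feasible because $m\le r+1\le n-\mathbf{B}_e$ and the parity $r+1\equiv m\pmod 2$ matches the constraint $\sum c_i\equiv m\pmod 2$. For Case B the crucial observation is the parity identity $n-\mathbf{B}_e\equiv m\pmod 2$, proved by a block-by-block parity count, which together with the hypothesis sharpens $n-\mathbf{B}_e\ge r+1$ to $n-\mathbf{B}_e\ge r+2$. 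I would then set $c_1$ to be the smallest non-negative even integer for which $r+1-c_1$ does not exceed the sum over $i\ge 2$ of the largest odd value not exceeding $|B_i|$, fill in $c_2,\dots,c_m$ as odd values realising that residual sum, and check that $c_1\le|B_1|$ with the correct parity together with $c_1\le r+2-m$. The main obstacle is precisely this feasibility bookkeeping in Case B: each bound comes out with very little margin, and the extra unit of slack supplied by the parity identity $n-\mathbf{B}_e\equiv m\pmod 2$ is exactly what makes the whole construction close.
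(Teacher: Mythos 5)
Your overall strategy coincides with the paper's: the lower bound is \Cref{minim_ort}, and the task is to build a circuit $X$ with $X\perp T=O(m)$ by prescribing the block intersection sizes $c_i=|\underline{X}\cap B_i|$. Your Case~A (all $c_i$ odd when $r+1-m$ is even) is identical to the paper's first case, and your analysis via \Cref{blocks_even-odd} that consecutive odd-$c_i$ blocks force all contribution signs to agree is exactly what the paper uses.

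The difference is in Case~B ($r+1-m$ odd), where you place the single even $c_i$ in block $B_1$ and therefore need both $c_1\le|B_1|$ and $c_1\le r+2-m$, which forces you to first derive the parity identity $n-\mathbf{B}_e\equiv m\pmod 2$ to upgrade the hypothesis to $n-\mathbf{B}_e\ge r+2$, and then to perform a careful case split on the parity of $|B_1|$ that you only sketch (``check that $c_1\le|B_1|$ \dots together with $c_1\le r+2-m$''). That verification does go through, but you have not written it out, and you rightly flag it as the main obstacle. The paper sidesteps all of this by filling $B_1,\dots,B_{m-1}$ greedily with the largest odd counts and letting the leftover $c_m=r+1-\sum_{i\le m-1}c_i$ (which is automatically even) land in $B_m$: the only constraint $c_m\le|B_m|$ then falls out immediately from $r+1\le n-\mathbf{B}_e=\sum_i M_i$, where $M_i$ is the largest odd integer at most $|B_i|$. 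So the paper's placement makes the feasibility argument a one-liner, while yours demands the extra parity lemma and the two-sided bound on $c_1$. One small slip in your write-up: you say the odd-$c_i$ blocks ``form the initial segment $\{1,\dots,o\}$,'' but your actual construction puts them at positions $\{2,\dots,m\}$; what matters, and what your recursion actually shows, is only that they are consecutive.

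Before this becomes a proof rather than a proof sketch, you need to carry out the feasibility check in Case~B explicitly. It does hold (e.g.\ $c_1\le r+2-m$ reduces to $M_1+m+\mathbf{B}_e\le n+1$, which one verifies by splitting on the parity of $|B_1|$), but as written there is a genuine gap where the argument says ``and check that \dots'' without doing so.
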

\begin{proof}
We will find a circuit  $X\in \mathcal{C}$ such that $X\perp T= O(m)$, where $O(m)=\lceil\frac{r+1-m}{2}\rceil$. First suppose that $r+1-m$ is even.
Then, there exists $X\in \mathcal{C}$ such that $|\underline{X}\cap B_i|$ is odd for every $1\le i\le m$, since $|\underline{X}|>m$ (\Cref{B_rn=cyclicpoly}) and $|\underline{X}|\le n-\mathbf{B}_e$.  
 Thus, by  \Cref{blocks_even-odd} and by possibly exchanging $X$ with $-X$, we can assume that $i\in S^X$ for every  $1\le i\le m$ and hence,
$$
X\perp T
\begin{array}[t]{l}
= \displaystyle \min\{|H(X,T)|, |S(X,T)|\}
=  \sum\limits_{i=1}^{m}|S(X,T)\cap B_i| \\ \ \\
= \displaystyle \sum\limits_{i=1}^{m}\frac{|\underline{X}\cap B_i|-1}{2}
=  \frac{(r+1-m)}{2}=O(m).
\end{array}
$$

Now suppose that $r+1-m$ is odd and consider $X\in \mathcal{C}$ such that $|\underline{X}\cap B_i|$ is odd for every $i\le m-1$ and $|(\bigcup\limits_{i=1}^{m-1} B_i)\cap \underline{X}|$ is maximum possible. 
 If there are no more elements in $\underline{X}$ to place, i.e., $|\underline{X}\cap B_i|$ is odd for every $1\le i\le m-1$ and $\underline{X}\cap B_m=\emptyset$, we obtain by \Cref{blocks_even-odd} and by possibly exchanging $X$ with $-X$, that $i\in S^X$ for every  $1\le i\le m-1$. Therefore,
$$
X\perp T
\begin{array}[t]{l}
= \displaystyle   \sum\limits_{i=1}^{m-1}|S(X,T)\cap B_i|
=\displaystyle \sum\limits_{i=1}^{m-1}\frac{|\underline{X}\cap B_i|-1}{2} \\ \ \\
= \displaystyle \frac{r+1-(m-1)}{2}=\frac{r+2-m}{2}=O(m),
\end{array}
$$
as desired. If  there are still elements in $\underline{X}$ to be placed, by the maximality of $|(\bigcup\limits_{i=1}^{m-1} B_i)\cap \underline{X}|$ and since $|\underline{X}|\le n-\mathbf{B}_e$,
we may place such elements in $B_m$ (see \Cref{figure_small_r+1}). Notice that $|\underline{X}\cap B_m|$ is even since $r+1-m$ is odd, obtaining that $m\in S^X\cap H^X$. Hence,   $i\in S^X$ for every  $1\le i\le m$ and then
$$X\perp T
\begin{array}[t]{l}
= \displaystyle \sum\limits_{i=1}^{m}|S(X,T)\cap B_i| =  \sum\limits_{i=1}^{m-1}\frac{|\underline{X}\cap B_i|-1}{2}+\frac{|\underline{X}\cap B_m|}{2} \\ \ \\
= \displaystyle  \frac{r+1-(m-1)}{2}=\frac{r+2-m}{2}=O(m).
\end{array}
$$
Therefore, there exists a circuit  $X$ such that $X\perp T= O(m)$ and so, $\Ort(T)\le O(m)$. Thus,   $\Ort(T)=O(m)$ by  \Cref{minim_ort}, concluding the proof.
\end{proof}

The example of Figure \ref{figure_small_r+1}, for $n=13, r+1=9,m=4$ and $\mathbf{B}_e=1$, illustrates the case of the above lemma when $r+1-m$ is odd. We choose a circuit $X$ such that $|\underline{X}\cap B_i|$ is odd for every $i\le m-1=3$ and  maximum possible. In this case, we notice that there are still elements in $\underline{X}$ to be placed, so, we may place such elements in $B_m=B_4$. Finally, we observe that $X\perp T=\min\{|H(X,T)|, |S(X,T)|\}=\min\{6, 3\}=3=O(m)$, as required.
 \begin{figure}[htb]
\begin{center}
\includegraphics[width=.5\textwidth]{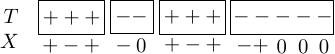}
 \caption{An example of the case $r+1-m$ odd in \Cref{lemmageneral:blocks-ortogonality}.} \label{figure_small_r+1}
 \end{center}
\end{figure}

Given a tope $T\in \mathcal{T}$ and a circuit $X\in \mathcal{C}$ of $\mathcal{C}_r(n)$, let us denote the sets $$S^X_{<j}=\Bigl\{ i \text{ } | \text{ } i<j \text{, } B_i\in \mathcal{B}_o  \text{ and } \min\{|S(X,T)\cap B_i|, |H(X,T)\cap B_i|\}=|S(X,T)\cap B_i|\Bigr\}$$ and $$H^X_{<j}=\Bigl\{i \text{ } | \text{ } i<j \text{, } B_i\in \mathcal{B}_o  \text{ and } \min\{|S(X,T)\cap B_i|, |H(X,T)\cap B_i|\}=|H(X,T)\cap B_i|\Bigr\},$$
see \Cref{figure_large_r+1_1,figure_large_r+1_2}. Notice that these sets  contain integers $i$ such that $B_i\in \mathcal{B}_o$ is an odd block.
The following lemma provides an upper bound for $\Ort(T)$ in terms of its blocks.
\begin{lemma} \label{lemmageneral:blocks-ortogonality2}
Let $T\in \mathcal{T}$ be with $m$ blocks and $n\ge r+2$.
 If $r+1> n-\mathbf{B}_e$, then
 $$\Ort(T)\le \frac{n-\mathbf{B}_e-m}{2}+r+1-(n-\mathbf{B}_e)+\left\lfloor\frac{\mathbf{B}_o}{2}\right\rfloor.$$ 
\end{lemma}
\begin{proof}
Let $\alpha=r+1-(n-\mathbf{B}_e)$.
In order to prove the lemma, it is enough to find $X\in \mathcal{C}$ such that
$X\perp T\le\frac{n-\mathbf{B}_e-m}{2}+\alpha+\lfloor\frac{\mathbf{B}_o}{2}\rfloor$. Let  us denote by $B^e_1,\ldots, B^e_{\alpha}$, the first $\alpha$ blocks of $\mathcal{B}_e$ (from left to right) and notice that $\alpha>0$ since by hypothesis $r+1> n-\mathbf{B}_e$. Then, $B^e_{\alpha}=B_j$ for some $j\in\{1,\ldots,m\}$. 
Consider  $X\in \mathcal{C}$ as follows:
\begin{itemize}
    \item $|\underline{X}\cap B_i|=|B_i|$ for every $B_i\in \mathcal{B}_o$,
    \item$|\underline{X}\cap B^e_i|=|B^e_i|$ for every $i=1,\ldots, \alpha$ and
    \item $|\underline{X}\cap B_i|=|B_i|-1$ otherwise.
\end{itemize}
Such a circuit exists, since
 $|\underline{X}|=n-\mathbf{B}_e+\alpha=r+1$. Thus,  as $|\underline{X}\cap B_i|$ is odd for every $i>j$, by  \Cref{blocks_even-odd} and by possibly exchanging $X$ with $-X$, we can assume that $i\in S^X$ for every  $i>j$.
Hence,
\begin{equation} \label{i>j}
\sum\limits_{i>j}|S(X,T)\cap B_i|
\begin{array}[t]{l}
= \displaystyle  \sum\limits_{i>j, B_i\in\mathcal{B}_e}|S(X,T)\cap B_i|+\sum\limits_{i>j, B_i\in \mathcal{B}_o}|S(X,T)\cap B_i|\\ \ \\
= \displaystyle \sum\limits_{i>j, B_i\in\mathcal{B}_e} \frac{|B_i|-2}{2} + \sum\limits_{i>j, B_i\in\mathcal{B}_o} \frac{|B_i|-1}{2}.\\ \ \\
\end{array}
\end{equation}
Consider the following cases.
\smallskip

\emph{Case $1$.
$|H^X_{<j}|\le \lfloor \frac{\mathbf{B}_o}{2}\rfloor$} (see  \Cref{figure_large_r+1_1}).

Then,
\begin{equation}\label{i<j}
\sum\limits_{i\le j}|S(X,T)\cap B_i|
\begin{array}[t]{l}
= \displaystyle  \sum\limits_{i\in S^X_{<j}\cup H^X_{<j}}|S(X,T)\cap B_i|+\sum\limits_{i\le j, B_i\in \mathcal{B}_e}|S(X,T)\cap B_i|\\ \ \\
= \displaystyle \sum\limits_{i\in S^X_{<j}} \frac{|B_i|-1}{2}+\sum\limits_{i\in H^X_{<j}} \frac{|B_i|+1}{2} + \sum\limits_{i\le j, B_i\in \mathcal{B}_e} \frac{|B_i|}{2}\\ \ \\
= \displaystyle \sum\limits_{i\in S^X_{<j}} \frac{|B_i|-1}{2}+\sum\limits_{i\in H^X_{<j}} \frac{|B_i|-1}{2} +|H^X_{<j}|+ \sum\limits_{i\le j, B_i\in \mathcal{B}_e} \frac{|B_i|-2}{2}+\alpha\\ \ \\
\le \displaystyle \sum\limits_{i\in S^X_{<j}\cup H^X_{<j}} \frac{|B_i|-1}{2} + \sum\limits_{i\le j, B_i\in \mathcal{B}_e} \frac{|B_i|-2}{2}+\alpha+\lfloor\frac{\mathbf{B}_o}{2}\rfloor.\\ \ \\
\end{array}
\end{equation}
Thus, by \Cref{i>j,i<j} we obtain that
$$
X\perp T
\begin{array}[t]{l}
=\min\{|H(X,T)|, |S(X,T)|\}
\le \displaystyle \sum\limits_{i\le j}|S(X,T)\cap B_i|+\sum\limits_{i>j}|S(X,T)\cap B_i|\\ \ \\
\le \displaystyle \sum\limits_{i\in S^X_{<j}\cup H^X_{<j}} \frac{|B_i|-1}{2} + \sum\limits_{i\le j, B_i\in \mathcal{B}_e} \frac{|B_i|-2}{2}+\alpha+\lfloor\frac{\mathbf{B}_o}{2}\rfloor\\ \ \\
+ \displaystyle \sum\limits_{i>j, B_i\in\mathcal{B}_e} \frac{|B_i|-2}{2} + \sum\limits_{i>j, B_i\in\mathcal{B}_o} \frac{|B_i|-1}{2}
=  \frac{n-\mathbf{B}_e-m}{2}+\alpha+\lfloor\frac{\mathbf{B}_o}{2}\rfloor\\ \ \\
\end{array}
$$
concluding that  $\Ort(T)\le \frac{n-\mathbf{B}_e-m}{2}+\alpha+\lfloor\frac{\mathbf{B}_o}{2}\rfloor$ and so, the lemma holds in this case.

\smallskip

\emph{Case $2$. $|H^X_{<j}|\ge \lceil \frac{\mathbf{B}_o}{2}\rceil$} (see  \Cref{figure_large_r+1_2}).
\smallskip

We will slightly  modify  $X$ in order to find another circuit $X'\in \mathcal{C}$ such that $X'\perp T\le\frac{n-\mathbf{B}_e-m}{2}+\alpha+\lfloor\frac{\mathbf{B}_o}{2}\rfloor$. First,
 observe that $\lceil\frac{\mathbf{B}_o}{2}\rceil\le|S^X_{<j}|+|H^X_{<j}|\le \mathbf{B}_o$. So, $|S^X_{<j}|+|H^X_{<j}|=\mathbf{B}_o-p$ for some $p\in\{0,\ldots,\lfloor\frac{\mathbf{B}_o}{2}\rfloor\}$. Then, $|S^X_{<j}|=\mathbf{B}_o-p-|H^X_{<j}|\le \mathbf{B}_o-p-\lceil\frac{\mathbf{B}_o}{2}\rceil$, concluding that
\begin{equation}\label{S^X_{<j}}
    |S^X_{<j}|\le \lfloor\frac{\mathbf{B}_o}{2}\rfloor-p
\end{equation}

 As $|S^X_{<j}|+|H^X_{<j}|+|\{i \text{ } | \text{ } i>j, B_i\in\mathcal{B}_o \}|=\mathbf{B}_o$, we also conclude that
\begin{equation}\label{B_o^{>j}=p}
    |\{i \text{ } | \text{ } i>j, B_i\in\mathcal{B}_o \}|=p
\end{equation}

Let $B_{j'}$ be the last block of $\mathcal{B}_e$ (from left to right). As $n\ge r+2$, we have that $\alpha<\mathbf{B}_e$ and then $j'>j$.  Choose $e\in B_j$ and $e'\in B_{j'}\setminus \underline{X}$ (such $e'$ exists since $|\underline{X}\cap B_{j'}|=|B_{j'}|-1$) and consider the circuit $\underline{X'}=\underline{X}-e\cup e'$. Hence, notice that
\begin{equation}\label{j and X'}
  S^X_{<j}=S_{<j}^{X'}
\end{equation}
Now, we claim that $j\in H^{X'}$. Let $i'=\max\{i \text{ } | \text{ } i<j \text{ and } B_i\in\mathcal{B}_o\}$ and first suppose that $i'\in S^X$. As we have assumed that $j+1\in S^X$, applying  \Cref{blocks_even-odd} to the blocks $B_{i'}$ and $B_{j+1}$, respect to the circuit $X$, we obtain that $(j+1)-i'-1$ is even. Hence, as $i'\in S^{X'}$ by \Cref{j and X'} and $j-i'-1$ is odd, applying   \Cref{blocks_even-odd} now to the blocks $B_{i'}$ and $B_{j}$, respect to the circuit $X'$, we obtain that $j\in H^{X'}$, as desired. The case when  $i'\in H^X$ can be treated analogously, so the claim holds. Thus,  by the above claim and by  \Cref{blocks_even-odd}  it can be deduced  that
\begin{equation}\label{j<i and X'}
    i\in H^{X'}  \text{ if } j\le i<j' \text{ and } i\in S^{X'} \text{ if } i>j' 
\end{equation}
Therefore, we conclude the following:
\vspace{0,5cm}

\begin{itemize}
    \item[(a)] $\sum\limits_{i\in S^{X'}_{<j}}|H(X',T)\cap B_i|=\sum\limits_{i\in S^{X'}_{<j}}\frac{|B_i|+1}{2}=\sum\limits_{i\in S^{X'}_{<j}}\frac{|B_i|-1}{2}+|S^{X'}_{<j}|\le \sum\limits_{i\in S^{X'}_{<j}}\frac{|B_i|-1}{2}+\lfloor\frac{\mathbf{B}_o}{2}\rfloor-p,$
      \vspace{0,3cm}

      by \Cref{S^X_{<j},j and X'}.
    \vspace{0,3cm}

      \item[(b)] $\sum\limits_{i\in H^{X'}_{<j}}|H(X',T)\cap B_i|=\sum\limits_{i\in H^{X'}_{<j}}\frac{|B_i|-1}{2}.$
       \vspace{0,3cm}

     \item[(c)] $|H(X',T)\cap B_{j'}| \text{ }+\sum\limits_{i<j \text{ and  } B_i\in \mathcal{B}_e}|H(X',T)\cap B_i|\begin{array}[t]{l}= \frac{|B_{j'}|}{2} \text{ } +\sum\limits_{i<j \text{ and  } B_i\in \mathcal{B}_e}\frac{|B_i|}{2} \\ \ \\ = \sum\limits_{i<j, i=j', \text{  } B_i\in \mathcal{B}_e}\frac{|B_i|-2}{2}+\alpha,\end{array}$
      \vspace{0,3cm}

     since such blocks are just $B^e_1,\ldots,B^e_{\alpha-1}$ and $B_{j'}$.

      \vspace{0,3cm}

     \item[(d)] $\sum\limits_{i\ge j, \text{ } i\neq j'} |H(X',T)\cap B_i|\begin{array}[t]{l} =\sum\limits_{j\le i<j', \text{ } B_i\in \mathcal{B}_e}\frac{|B_i|-2}{2} \text{ } +\sum\limits_{j<i<j', \text{ } B_i\in \mathcal{B}_o}\frac{|B_i|-1}{2} \text{ } +\sum\limits_{i>j'}\frac{|B_i|+1}{2}\\ \ \\ \le \sum\limits_{j\le i<j', \text{ } B_i\in \mathcal{B}_e}\frac{|B_i|-2}{2}+\sum\limits_{j<i<j', \text{ } B_i\in \mathcal{B}_o}\frac{|B_i|-1}{2}+\sum\limits_{i>j'}\frac{|B_i|-1}{2}+p,\end{array}$

by \Cref{B_o^{>j}=p,j<i and X'}.
\end{itemize}
\vspace{0,3cm}

Finally, by (a), (b), (c) and (d), we obtain that,

$$
X'\perp T
\begin{array}[t]{l}
= \sum\limits_{i< j}|H(X',T)\cap B_i|+\sum\limits_{i\ge j}|H(X',T)\cap B_i|\\ \ \\
\le \displaystyle \sum\limits_{i\in S^{X'}_{<j}\cup H^{X'}_{<j}} \frac{|B_i|-1}{2} +\lfloor\frac{\mathbf{B}_o}{2}\rfloor-p+\alpha+\sum\limits_{i< j, i=j', B_i\in \mathcal{B}_e} \frac{|B_i|-2}{2}\\ \ \\
+\displaystyle \sum\limits_{j\le i<j', B_i\in \mathcal{B}_e}\frac{|B_i|-2}{2}+\sum\limits_{j<i<j', B_i\in \mathcal{B}_o}\frac{|B_i|-1}{2}+\sum\limits_{i>j'}\frac{|B_i|-1}{2}+p\\ \ \\
=  \frac{n-\mathbf{B}_e-m}{2}+\alpha+\lfloor\frac{\mathbf{B}_o}{2}\rfloor,\\ \ \\
\end{array}
$$
concluding that  $\Ort(T)\le \frac{n-\mathbf{B}_e-m}{2}+\alpha+\lfloor\frac{\mathbf{B}_o}{2}\rfloor$ and so, the lemma holds.
\end{proof}
The example of \Cref{figure_large_r+1_1}, for $n=13, r+1=10,m=9,\mathbf{B}_e=4, \mathbf{B}_o=5, \alpha=1$ and $j=3$, illustrate Case $1$ of the above lemma. We consider the circuit $X$ as in the proof and we notice that $X\perp T=3=\frac{n-\mathbf{B}_e-m}{2}+\alpha+\lfloor\frac{\mathbf{B}_o}{2}\rfloor$, as required.
 \begin{figure}[htb]
\begin{center}
\includegraphics[width=.5\textwidth]{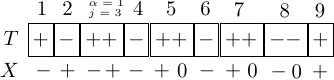}
 \caption{An example of the case $|H^X_{<j}|\le \lfloor \frac{\mathbf{B}_o}{2}\rfloor$ of  \Cref{lemmageneral:blocks-ortogonality2}. Notice that $2=|H^X_{<j}|=|H^X_{<3}|=\lfloor\frac{\mathbf{B}_o}{2}\rfloor$, $H^X_{<j}=\{1,2\}$,
 $S^X=\{3,4,5,6,7,8,9\}$ and $H^X=\{1,2,3\}$.} \label{figure_large_r+1_1}
 \end{center}
\end{figure}

The example of  \Cref{figure_large_r+1_2}, for $n=13, r+1=11,m=9,\mathbf{B}_e=4, \mathbf{B}_o=5, \alpha=2$ and $j=6$, illustrates the Case $2$ of \Cref{lemmageneral:blocks-ortogonality2}. We consider the circuit $X$ as in the proof and we notice that $X\perp T=5>4=\frac{n-\mathbf{B}_e-m}{2}+\alpha+\lfloor\frac{\mathbf{B}_o}{2}\rfloor$. Then, we consider the circuit $\underline{X'}=\underline{X}-e\cup e'$ and we observe that $X'\perp T=3<4=\frac{n-\mathbf{B}_e-m}{2}+\alpha+\lfloor\frac{\mathbf{B}_o}{2}\rfloor$, as required.
 \begin{figure}[htb]
\begin{center}
\includegraphics[width=.5\textwidth]{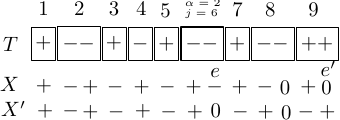}
 \caption{An example of the case $|H^X_{<j}|> \lfloor \frac{\mathbf{B}_o}{2}\rfloor$ of \Cref{lemmageneral:blocks-ortogonality2}. Notice that $|H^X_{<j}|=|H^X_{<6}|=|\{3,4,5\}|=3$, $\lfloor \frac{\mathbf{B}_o}{2}\rfloor=2$,  $S^X=\{1,2,6,7,8,9\}$, $H^X=\{2,3,4,5,6\}$, $S^{X'}=\{1,2,9\}$, $H^{X'}=\{2,3,4,5,6,7,8,9\}$ and $S^X_{<j}=S^{X'}_{<j}=\{1\}$.} \label{figure_large_r+1_2}
 \end{center}
\end{figure}
\begin{proposition}\label{nlargeOrt(T)=k}
Let $n\ge 2(r-k)+3\ge r+2$ and let $T\in \mathcal{T}$ be with $m$ blocks. Then the following holds.
\begin{itemize}
  \item[(i)] If $O(m)\ge k$, then $\Ort(T)=O(m)$;
  \item[(ii)] If $O(m)\le k-1$ then $\Ort(T)\le k-1$.
\end{itemize}
\end{proposition}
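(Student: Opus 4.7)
The plan is to combine Lemmas~\ref{minim_ort}, \ref{lemmageneral:blocks-ortogonality}, and \ref{lemmageneral:blocks-ortogonality2}, exploiting the hypothesis $n \ge 2(r-k)+3$ to show that the upper bound coming from Lemma~\ref{lemmageneral:blocks-ortogonality2} is always at most $k-1$. Both parts of the proposition then fall out by a short case split on whether $r+1 \le n - \mathbf{B}_e$.

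\smallskip

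First I would simplify the bound in Lemma~\ref{lemmageneral:blocks-ortogonality2}. Using $\mathbf{B}_e + \mathbf{B}_o = m$, straightforward algebra gives
\[
\frac{n-\mathbf{B}_e-m}{2}+r+1-(n-\mathbf{B}_e)+\left\lfloor\frac{\mathbf{B}_o}{2}\right\rfloor \;=\; r+1-\frac{n+\mathbf{B}_o}{2}+\left\lfloor\frac{\mathbf{B}_o}{2}\right\rfloor.
\]
Since odd blocks have odd size and even blocks have even size, the parities of $n$ and $\mathbf{B}_o$ coincide, so handling the cases $\mathbf{B}_o$ even and $\mathbf{B}_o$ odd separately shows that the right-hand side equals $r+1-\lceil n/2 \rceil$, regardless of the block structure of $T$. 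The hypothesis $n \ge 2(r-k)+3$ then yields $\lceil n/2 \rceil \ge r-k+2$, so Lemma~\ref{lemmageneral:blocks-ortogonality2} specializes to
\[
\Ort(T) \le r+1-\lceil n/2 \rceil \le k-1 \quad\text{whenever } r+1 > n-\mathbf{B}_e. \qquad (\star)
\]

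\smallskip

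For part (i), assume $O(m) \ge k$. If we were in the regime $r+1 > n-\mathbf{B}_e$, then $(\star)$ would give $\Ort(T) \le k-1 < k \le O(m)$, contradicting Lemma~\ref{minim_ort}. Hence $r+1 \le n-\mathbf{B}_e$, and Lemma~\ref{lemmageneral:blocks-ortogonality} directly yields $\Ort(T) = O(m)$. For part (ii), assume $O(m) \le k-1$. If $r+1 \le n-\mathbf{B}_e$, Lemma~\ref{lemmageneral:blocks-ortogonality} gives $\Ort(T) = O(m) \le k-1$; otherwise $(\star)$ again yields $\Ort(T) \le k-1$.

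\smallskip

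The only real content is the parity-based simplification of the Lemma~\ref{lemmageneral:blocks-ortogonality2} bound into the clean expression $r+1-\lceil n/2 \rceil$; once this is noticed, the proposition is essentially automatic. I do not foresee a serious obstacle beyond carefully separating the two parities of $\mathbf{B}_o$ in that simplification step.
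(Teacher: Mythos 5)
Your proof is correct. It uses the same three ingredients as the paper (Lemmas~\ref{minim_ort}, \ref{lemmageneral:blocks-ortogonality}, and \ref{lemmageneral:blocks-ortogonality2}) and the same case split on whether $r+1\le n-\mathbf{B}_e$, but the algebraic treatment genuinely differs and is cleaner. The paper's proof of (ii) parametrizes $n=2(r-k)+3+l$, writes $\mathbf{B}_e = r-k+1+\lceil l/2\rceil - j$, computes $\alpha$, and then splits on the parity of $\mathbf{B}_o$; your key observation --- that $\mathbf{B}_e+\mathbf{B}_o=m$ and the parity agreement $n\equiv\mathbf{B}_o\pmod 2$ collapse the bound of Lemma~\ref{lemmageneral:blocks-ortogonality2} to the block-independent quantity $r+1-\lceil n/2\rceil$ --- absorbs all of that bookkeeping in one step, since $n\ge 2(r-k)+3$ then immediately forces $r+1-\lceil n/2\rceil\le k-1$. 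For part (i) the paper deduces $r+1\le n-\mathbf{B}_e$ directly from $O(m)\ge k$ via $m\le r+2-2k$, whereas you reach the same conclusion by contradiction against Lemma~\ref{minim_ort}; both are valid, and your route has the advantage of reusing the simplified bound $(\star)$ uniformly in both parts. Overall this is a streamlined presentation of the paper's argument rather than a new one, but the simplification of the Lemma~\ref{lemmageneral:blocks-ortogonality2} bound is a worthwhile improvement in clarity.
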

\begin{proof}Recall that  $O(m)=\lceil\frac{r+1-m}{2}\rceil$.

(i) As $k\le O(m)\le\frac{r+2-m}{2}$, then  $m\le r+2-2k$. Hence, as $\mathbf{B}_e\le m$ and $2(r-k)+3\le n$ we obtain that $r+1+\mathbf{B}_e\le r+1+m\le 2(r-k)+3\le n$, concluding that $r+1\le n-\mathbf{B}_e$. Thus, $\Ort(T)=O(m)$ by \Cref{lemmageneral:blocks-ortogonality} and the result holds in this case.

(ii)  Let $n=2(r-k)+3+l$, where $l\ge 0$ is an integer. As each even block have at least two elements, we have that $\mathbf{B}_e\le \lfloor\frac{n}{2}\rfloor$. If $n$ is odd, then $l$ is even and $\lfloor\frac{n}{2}\rfloor=r-k+1+\frac{l}{2}$. Similarly,   $\lfloor\frac{n}{2}\rfloor=r-k+1+\frac{l+1}{2}$ if $n$ is even. Hence, $\mathbf{B}_e\le \lfloor\frac{n}{2}\rfloor=r-k+1+\lceil\frac{l}{2}\rceil$ and so, $\mathbf{B}_e=r-k+1+\lceil\frac{l}{2}\rceil-j$ for some integer $j\ge 0$.
 On  the other hand, let $\alpha=r+1+\mathbf{B}_e-n$ and suppose first that  $\alpha\le 0$. Then $r+1\le n-\mathbf{B}_e$ and by \Cref{lemmageneral:blocks-ortogonality}, $\Ort(T)=O(m)\le k-1$, as desired. Now suppose that $\alpha>0$, then $\Ort(T)\le \frac{(n-\mathbf{B}_e)-m}{2}+\alpha+\lfloor\frac{\mathbf{B}_o}{2}\rfloor$ by  \Cref{lemmageneral:blocks-ortogonality2}. As $\alpha=r+1+(r-k+1+\lceil\frac{l}{2}\rceil-j)-(2(r-k)+3+l)=k-1-j-\lfloor\frac{l}{2}\rfloor$ and $m=\mathbf{B}_e+\mathbf{B}_o$, we obtain that
$\Ort(T)\le \frac{(2(r-k)+3+l)-(r-k+1+\lceil\frac{l}{2}\rceil)-(r-k+1+\lceil\frac{l}{2}\rceil+\mathbf{B}_o)}{2}+(k-1-j-\lfloor\frac{l}{2}\rfloor)+\lfloor\frac{\mathbf{B}_o}{2}\rfloor$ and thus,
$\Ort(T)\le \frac{1+\lfloor\frac{l}{2}\rfloor-\lceil\frac{l}{2}\rceil-\mathbf{B}_o}{2}+\frac{2k-2-2j-2\lfloor\frac{l}{2}\rfloor}{2}+\lfloor\frac{\mathbf{B}_o}{2}\rfloor$, concluding that
$$\Ort(T)\le \frac{2k-1-l-\mathbf{B}_o}{2}+\lfloor\frac{\mathbf{B}_o}{2}\rfloor.$$
If $\mathbf{B}_o$ is odd, then $\Ort(T)\le \frac{2k-1-l-\mathbf{B}_o}{2}+\frac{\mathbf{B}_o-1}{2}\le k-1$. If $\mathbf{B}_o$ is even, then $l\ge 1$ is odd. Therefore, $\Ort(T)\le \frac{2k-1-l-\mathbf{B}_o}{2}+\frac{\mathbf{B}_o}{2}\le \frac{2k-2-\mathbf{B}_o}{2}+\frac{\mathbf{B}_o}{2}=k-1$, concluding the proof.
\end{proof}

Recall that $\mathcal{O}_k(\M)=\{T\in \mathcal{T} \text{ } | \text{ } \Ort(T)=k\}$. For short, let us denote  $\mathcal{O}_k(\mathcal{C}_r(n))$ by $\mathcal{O}_k$. Let denote by $\mathcal{T}_{m}$ the set of $T\in \mathcal{T}$ with exactly $m$ blocks.
Next, we will obtain the $o$-vector of $\mathcal{C}_r(n)$, for $n$ large enough.

\begin{theorem}\label{O-formula_neighb}
If $n\ge 2(r-k)+1\ge r+2$, then
$$o(\mathcal{C}_r(n),i)=2\binom{n}{r-1-2i}$$ for every $i=k, \ldots, \lfloor\frac{r-1}{2}\rfloor$.
\end{theorem}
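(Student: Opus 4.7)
The plan is to count, for each $i\in\{k,\ldots,\lfloor(r-1)/2\rfloor\}$, the topes $T$ of $\mathcal{C}_r(n)$ satisfying $\Ort(T)=i+1$; by \Cref{relation_ort_neighb} this count equals $o(\mathcal{C}_r(n),i)$. Setting $\ell=i+1$, I would apply \Cref{nlargeOrt(T)=k} with $\ell$ in place of $k$. Its hypothesis $n\ge 2(r-\ell)+3=2(r-i)+1$ follows from $i\ge k$ via $n\ge 2(r-k)+1\ge 2(r-i)+1$, while $2(r-\ell)+3\ge r+2$ reduces to $\ell\le (r+1)/2$, which holds since $i\le\lfloor(r-1)/2\rfloor$.

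\Cref{nlargeOrt(T)=k} then identifies a tope $T$ with $m$ blocks as being $(i+1)$-orthogonal if and only if $O(m)=i+1$: indeed $O(m)\le i$ forces $\Ort(T)\le i$, whereas $O(m)\ge i+1$ gives $\Ort(T)=O(m)$, and the latter equals $i+1$ precisely when $O(m)=i+1$. Unpacking $O(m)=\lceil(r+1-m)/2\rceil=i+1$ shows that this occurs exactly for $m\in\{r-2i-1,\,r-2i\}$.

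Both of these values of $m$ are at most $r$, so by \Cref{B_rn=cyclicpoly} every sign-vector with such a block count is a tope of $\mathcal{C}_r(n)$. \Cref{numberBlocks} then provides the totals $2\binom{n-1}{r-2i-2}$ and $2\binom{n-1}{r-2i-1}$, and summing together with Pascal's identity yields
\[
o(\mathcal{C}_r(n),i)=|\mathcal{O}_{i+1}(\mathcal{C}_r(n))|=2\binom{n-1}{r-2i-2}+2\binom{n-1}{r-2i-1}=2\binom{n}{r-1-2i},
\]
which is the claimed formula. The main step is the reduction to block counts via \Cref{nlargeOrt(T)=k}, which has already been established earlier in this subsection; what remains is the verification of its hypotheses in the present regime plus a short Pascal computation, with the boundary case $i=\lfloor(r-1)/2\rfloor$ handled by the convention $\binom{n-1}{-1}=0$ when $r-2i-2$ becomes negative.
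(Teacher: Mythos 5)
Your proposal is correct and follows essentially the same route as the paper's proof: it reduces the count of topes with $\Ort(T)=i+1$ to a block-count condition via \Cref{nlargeOrt(T)=k}, identifies the admissible block numbers $m\in\{r-2i-1,\,r-2i\}$ (which are the paper's $m-1$ and $m$ with $m\equiv r\bmod 2$), and then combines \Cref{numberBlocks} with Pascal's identity and \Cref{relation_ort_neighb}. The only cosmetic difference is that you parameterize by $i$ and solve for $m$, while the paper parameterizes by $m$ and reads off $O(m)$.
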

\begin{proof}
For every $k=0,\ldots, \lfloor\frac{r-1}{2}\rfloor$ let $k'=k+1$ and consider $m\in\{1,\dots,r\}$  such that  $\dfrac{r+2-m}{2}\in \mathbb{N}$ and $O(m)\geq k'$, where $O(m)=\lceil\frac{r+1-m}{2}\rceil$.

\smallskip

We first claim that $\mathcal{T}_{m}\cup  \mathcal{T}_{m-1} = \mathcal{O}_{O(m)}$.
Let $T \in \mathcal{T}_{m}\cup  \mathcal{T}_{m-1}$. By  \Cref{nlargeOrt(T)=k} (i), $\Ort(T)=O(m)$ since $O(m)\geq k'$ and $n\ge 2(r-k')+3$, concluding that $T\in \mathcal{O}_{O(m)}$. Now, let $T'$ be a tope of $\mathcal{C}_r(n)$ with $m'$ blocks and such that $T'\notin \mathcal{T}_{m}\cup  \mathcal{T}_{m-1}$. We will prove that $T'\notin \mathcal{O}_{O(m)}$. As  $O(m)=O(m-1)$ by the choice of $m$  and $m'\not\in\{m, m-1\}$, we obtain that $O(m')\neq O(m)$. If $O(m')\geq k'$, then $\Ort(T') = O(m')$  by  \Cref{nlargeOrt(T)=k} (i), concluding that $T\notin \mathcal{O}_{O(m)}$ since $\Ort(T') = O(m')\neq O(m)$. If $O(m')\le  k-1$, then $\Ort(T')\le k'-1$   by \Cref{nlargeOrt(T)=k} (ii). As  $O(m)\geq k'$, we obtain that  $\Ort(T')\neq O(m)$ and so, $T'\notin \mathcal{O}_{O(m)}$. Hence, $\mathcal{T}_{m}\cup  \mathcal{T}_{m-1} = \mathcal{O}_{O(m)}$ and the claim holds.
\smallskip

As $|\mathcal{T}_{m}|=2{n-1 \choose m-1}$ and $|\mathcal{T}_{m-1}|=2{n-1 \choose m-2}$ by \Cref{numberBlocks},  we obtain by the above claim that $|\mathcal{O}_{O(m)}|=2({n-1 \choose m-1} + {n-1 \choose m-2}) = 2{n \choose m-1}$.  On the other hand, as $O(m)=\frac{r+2-m}{2}$  by the choice of $m$, then  $m = r+2-2O(m)$ and so, $$|\mathcal{O}_{O(m)}| = 2{n \choose r+1-2O(m)}$$ for every $O(m)\geq k'$. 
Finally, as $o(\mathcal{C}_r(n),k)=|\mathcal{O}_{k+1}|$ for every $k=0,\ldots, \lfloor\frac{r-1}{2}\rfloor$ (\Cref{relation_ort_neighb}), the theorem holds.
\end{proof}

The above theorem is best possible. For instance,  \Cref{o_i(r+1)} shows that  $o(\mathcal{C}_r(n),k)\neq2$ if $n=r+1$ and $k=\frac{r-1}{2}$ (i.e., for $n<2(r-k)+1$), while the formula given in \Cref{O-formula_neighb} for these values give us $2\binom{n}{r-1-2k}=2$. Further,  \Cref{example_$o$-vector} shows  several values of $r,k$ and   $n<2(r-k)+1$, where $o(\mathcal{C}_r(n),k)\neq2\binom{n}{r-1-2k}$.


\smallskip

Below, we  obtain the $o$-vector of $\mathcal{C}_r(r+1)$.
 Given two sign-vectors $X, Y\in \{+,-\}^n$, we denote by ${X\cdot Y}$ the vector whose $i$-th entry is $-$ if $i \in S(X,T)$ and $+$ if $i \in H(X,T)$.

\begin{proposition}\label{o_i(r+1)}
Let $r\ge3$ and $0\le k\le\lfloor\frac{r-1}{2}\rfloor$, then
\begin{equation*}
     o(\mathcal{C}_r(r+1),k)=\left\{
	       \begin{array}{ll}
		 {r+1 \choose k+1}         & \text{ if    $k=\frac{r-1}{2}$; }  \\ \ \\
		 2 {r+1 \choose k+1}      & \text{ otherwise. }\\

	       \end{array}
	     \right.
\end{equation*}
\end{proposition}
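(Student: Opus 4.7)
The plan is to exploit the fact that $\mathcal{C}_r(r+1)$ is as constrained as possible: being uniform of rank $r$ on $r+1$ elements, it has exactly one circuit up to sign, namely the alternating sign-vector $X \in \{+,-\}^{r+1}$ (equivalently, $\underline{X}=[r+1]$, with entries $+,-,+,-,\dots$). Hence for any tope $T$ we have
\[
\Ort(T)=X\perp T=\min\{|H(X,T)|,\,|S(X,T)|\},
\]
and $|H(X,T)|+|S(X,T)|=r+1$. This reduces the problem to a binomial count.

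By \Cref{relation_ort_neighb}, $o(\mathcal{C}_r(r+1),k)=|\mathcal{O}_{k+1}|$, so I have to count topes $T$ with $\min\{|H(X,T)|,|S(X,T)|\}=k+1$. Since $0\le k\le \lfloor (r-1)/2\rfloor$, we have $k+1\le \lfloor (r+1)/2\rfloor\le (r+1)/2$. I would split into two cases:

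\textbf{Case A:} $k+1<(r+1)/2$, i.e., $k\neq (r-1)/2$. Then the condition $\min=k+1$ is realised either by $|H(X,T)|=k+1$ (and $|S(X,T)|=r-k$) or by $|S(X,T)|=k+1$ (and $|H(X,T)|=r-k$). These two families are disjoint since $k+1\neq r-k$, and each consists of $\binom{r+1}{k+1}$ sign-vectors (choose the $k+1$ coordinates where $T$ agrees with, resp.\ disagrees with, $X$). Total: $2\binom{r+1}{k+1}$.

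\textbf{Case B:} $k+1=(r+1)/2$, which forces $r$ odd and $k=(r-1)/2$. Now $|H(X,T)|=|S(X,T)|=k+1$ is a single condition, giving $\binom{r+1}{k+1}$ sign-vectors.

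The only remaining bookkeeping step is to verify that every sign-vector $T$ counted above is genuinely a tope, i.e.\ not one of the alternating sign-vectors $\pm X$ (which by \Cref{B_rn=cyclicpoly} are the only non-topes in $\{+,-\}^{r+1}$). But $T=\pm X$ would force $\{|H(X,T)|,|S(X,T)|\}=\{0,r+1\}$, contradicting $\min=k+1\ge 1$. So no tope is miscounted, and combining Cases A and B with \Cref{relation_ort_neighb} yields the stated formula. There is no real obstacle here beyond keeping the parity case clean; the proof is essentially a one-circuit calculation.
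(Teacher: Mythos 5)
Your proof is correct and takes essentially the same approach as the paper's: both reduce to the observation that $\mathcal{C}_r(r+1)$ has a single circuit $X$ up to sign, so $\Ort(T)=X\perp T$, and then count sign-vectors by $\min\{|H(X,T)|,|S(X,T)|\}$. The paper packages this via an explicit bijection $T\mapsto X\cdot T$ from topes onto $\{+,-\}^{r+1}\setminus\{\pm Z\}$, while you do the equivalent count directly with the same parity split at $k=\tfrac{r-1}{2}$.
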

\begin{proof}
 Let  $X$ be the unique circuit of $\mathcal{C}_r(r+1)$ that starts with the sign $+$. Consider  the set $\mathcal{Y}= \{Y\in \{+,-\}^{r+1}\}\setminus\{Z,-Z\}$, where  $Z\in \{+\}^{r+1}$. Now, let define the function $f:\mathcal{T}\rightarrow \mathcal{Y}$ as $f(T)=X\cdot T$. We will see that $f$ is  bijective. Consider any tope $T\in \mathcal{T}$ and notice that $X\cdot T\not\in\{Z,-Z\}$ since  by \Cref{B_rn=cyclicpoly}, $T$ has at most $r$ blocks. Thus, $f$ is  injective since for every $T,T'\in \mathcal{T}$, clearly $X\cdot T\neq X\cdot T'$ and $X\cdot T,X\cdot T'\in \mathcal{Y}$.
Now, let $T=(T_1,\ldots,T_{r+1})$ be such that
\begin{equation*}
    T_i = \left\{
	       \begin{array}{ll}
		 +     & \text{if  $i$ is odd and }Y_i=+ ; \\
		 -     & \text{if  $i$ is odd and } Y_i=- ;\\
		 +     & \text{if  $i$ is even and } Y_i=- ; \\
		 -     & \text{if  $i$ is even and } Y_i=+ ,\\
	       \end{array}
	     \right.
\end{equation*}
for every $i=1,\ldots,r+1$.
Then, we notice that $X\cdot T=Y$ since $X$ is alternating. Moreover, $T$ has at most $r$ blocks since  $Y\not\in\{Z,-Z\}$. Hence, $T\in \mathcal{T}$ by  \Cref{B_rn=cyclicpoly} and so,  $f$ is bijective.

Therefore,  as $\Ort(T) = X\perp T = \min\{(X\cdot T)^+, (X\cdot T)^-\}$,
 computing  $|\mathcal{O}_{k+1}(\mathcal{C}_r(r+1))|$ is equivalent to counting the number of sign-vectors $Y\in\mathcal{Y}$ with $|Y^+|=k+1$ and the number of sign-vectors $Y\in\mathcal{Y}$ with $|Y^-|=k+1$, for every $k=0,\ldots,\lfloor \frac{r-1}{2}\rfloor$, which is ${r+1 \choose k+1}$ if  $k=\frac{r-1}{2}$ and  $2 {r+1 \choose k+1}$  otherwise. Finally, as $|\mathcal{O}_{k+1}(\mathcal{C}_r(r+1))|=o(\mathcal{C}_r(r+1),k)=$ by \Cref{relation_ort_neighb}, the result holds.
\end{proof}


\subsection{Reducing $k$-Roudneff's conjecture to a finite case analysis and proving the case $k=\frac{r-1}{2}$}\label{section_general}


The main results of this subsection are that \Cref{quest:kRoudneff} can be reduced, for fixed $r$ and $k$, to uniform oriented matroids and a finite case analysis and that \Cref{quest:kRoudneff}  holds for $k=\frac{r-1}{2}$ (\Cref{prop:onlythebaseishard} and \Cref{r_odd_max_ort}).

\smallskip
Given a tope $T$ of $\mathcal{M}$ and  $e\in E$, denote by ${{_{-e}T}}$ the sign-vector obtained from reorienting the element $e$ in $T$.

\begin{lemma}\label{neighbors_max_ort} 
  Let $\mathcal{M}=(E,\mathcal{C})$ be an  oriented matroid of odd rank $r\ge3$ on $|E|\ge r+1$ elements and suppose that $\mathcal{M}$ has an $(\frac{r+1}{2})$-orthogonal tope $T$.  Then, $\Ort({{_{-e}T}})=\frac{r-1}{2}$  for every $e\in E$.
\end{lemma}
\begin{proof}
Let $e\in E$ and first consider a circuit $X\in \mathcal{C}$ with $e\not\in \underline{X}$ (if exists). Then, $X\perp {{_{-e}T}}=\frac{r+1}{2}$  since $|S(X,T)|=|S(X,{{_{-e}T}})|=|H(X,T)|=|H(X,{{_{-e}T}})|=\frac{r+1}{2}$. 
Now, consider $X\in \mathcal{C}$ such that $e\in \underline{X}$ 
and notice that $X\perp {{_{-e}T}}=\frac{r-1}{2}$.
Therefore, $X\perp {{_{-e}T}}\ge \frac{r-1}{2}$ for every circuit $X\in \mathcal{C}$, concluding that ${{_{-e}T}}$ is $(\frac{r-1}{2})$-orthogonal. Moreover, since there exists $X\in \mathcal{C}$ such that $X\perp T'=\frac{r-1}{2}$, we conclude that $\Ort({{_{-e}T}})=\frac{r-1}{2}$.
\end{proof}

For an oriented matroid $\mathcal{M}$ on $E$ elements, $e\in E$ and a tope $T$, recall that $T\setminus e$ is the sign-vector on ground set $E\setminus e$ and the same signs on these elements as $T$.




\begin{lemma}\label{k_ort_contract_deletion}
Let $\mathcal{M}$ be  a uniform  rank $r\ge3$
 oriented matroid  on $n=|E|$ elements and $e\in E$. If $T$ is a $k$-orthogonal tope of  $\mathcal{M}$,  $1\le k \le \lfloor\frac{r+1}{2}\rfloor$, then

 \begin{itemize}
     \item[(a)] $T\setminus e$ is a $k$-orthogonal tope in $\mathcal{M}\setminus e$;
     \item[(b)] $T\setminus e$ is a $k$-orthogonal tope in $\mathcal{M}/ e$ if ${{_{-e}T}}$ is $k$-orthogonal and $k<\frac{r+1}{2}$.
 \end{itemize}
\end{lemma}
\begin{proof}
 (a) By \Cref{prop:topegraph}, $B^{\mathcal{G}(\mathcal{M})}_{k-1}(T)\cong B^{Q_n}_{k-1}(T)$. The tope graph of $\mathcal{M}\setminus e$ can be obtained by contracting all the edges of $\mathcal{G}(\mathcal{M})$, corresponding to $e$. Hence,  $B^{\mathcal{G}(\mathcal{M}\setminus e)}_{k-1}(T\setminus e)\cong B^{Q_{n-1}}_{k-1}(T\setminus e)$ and so, $T\setminus e$ is a $k$-orthogonal tope in $\mathcal{M}\setminus e$.
 \smallskip

 (b) Notice that $\mathcal{M}/e$ has rank $r'=r-1$ and $k\le \lfloor\frac{r'+1}{2}\rfloor$.
If $T\setminus e$ is not a $k$-orthogonal tope in $\mathcal{M}/ e$, then there exists a circuit $Y$ of $\mathcal{M}/ e$ such that $(T\setminus e)\perp Y \le k-1$, but then
taking the circuit $\underline{X}=\underline{Y}\cup e$ of $\mathcal{M}$, we obtain that
$T\perp X\le k-1$ or ${{_{-e}T}}\perp X\le k-1$, since $T$ and ${{_{-e}T}}$ differ in exactly the entry $e$, contradicting then the fact that $T$ and ${{_{-e}T}}$ were $k$-orthogonal. Therefore,  $T\setminus e$ is a $k$-orthogonal tope and the result holds.
 \end{proof}
\Cref{figureB(3_5)B(3_4)} shows the tope graph of $\mathcal{M}=\mathcal{C}_3(5)$ and $\mathcal{M}\setminus e=\mathcal{C}_3(4)$, where $e=5$. Observe that $k$-orthogonal topes in $\mathcal{M}$ are mapped to $k$-orthogonal topes in $\mathcal{M}\setminus e$, for $k=1,2$.
Next, we will obtain a bound for $m(\M,k)$ in terms of $\mathcal{M}\setminus e$  and $\mathcal{M}/ e$.

\begin{lemma}\label{thm:onlythebaseishard}
 Let $\mathcal{M}$ be  a uniform  rank $r\ge3$
 oriented matroid  and let $e\in E$. Then,
$m(\M,k) \le m(\M\setminus e,k)+m(\M/ e,k)$
for every  $k=0,\ldots, \lfloor\frac{r-1}{2}\rfloor$.
\end{lemma}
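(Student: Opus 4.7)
The plan is to translate $k$-neighborliness into $(k+1)$-orthogonality via \Cref{relation_ort_neighb}, reducing the goal to
\[
|\mathcal{M}^{\perp_{k+1}}|\ \le\ |(\mathcal{M}\setminus e)^{\perp_{k+1}}|+|(\mathcal{M}/e)^{\perp_{k+1}}|.
\]
I would then partition the $(k+1)$-orthogonal topes of $\mathcal{M}$ according to whether flipping coordinate $e$ keeps them $(k+1)$-orthogonal, and bound each part via deletion and contraction respectively.

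Concretely, set $\mathcal{P}=\{T\in\mathcal{M}^{\perp_{k+1}}\ :\ {_{-e}T}\in\mathcal{M}^{\perp_{k+1}}\}$ and $\mathcal{S}=\mathcal{M}^{\perp_{k+1}}\setminus\mathcal{P}$; since $\mathcal{P}$ is closed under the involution $T\mapsto{_{-e}T}$, write $|\mathcal{P}|=2A$ and $|\mathcal{S}|=B$. Two topes of $\mathcal{M}$ have the same restriction to $E\setminus e$ iff they differ only at $e$, so the map $T\mapsto T_{\setminus e}$ is $2$-to-$1$ on $\mathcal{P}$ and injective on $\mathcal{S}$, with disjoint images. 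By \Cref{k_ort_contract_deletion}(a) these images lie in $(\mathcal{M}\setminus e)^{\perp_{k+1}}$, hence $|(\mathcal{M}\setminus e)^{\perp_{k+1}}|\ge A+B$. Provided $k+1<\tfrac{r+1}{2}$, the map $T\mapsto T_{/e}$ restricted to $\mathcal{P}$ is likewise $2$-to-$1$, and by \Cref{k_ort_contract_deletion}(b) its image lies in $(\mathcal{M}/e)^{\perp_{k+1}}$, giving $|(\mathcal{M}/e)^{\perp_{k+1}}|\ge A$. Adding the two bounds yields $|\mathcal{M}^{\perp_{k+1}}|=2A+B\le(A+B)+A$, as required.

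The only delicate point, and the expected main obstacle, is the boundary case $k+1=\tfrac{r+1}{2}$, which occurs exactly when $r$ is odd and $k=\tfrac{r-1}{2}$. Here \Cref{k_ort_contract_deletion}(b) is unavailable because the contraction $\mathcal{M}/e$ has rank $r-1$ and the orthogonality level $k+1$ exceeds what a rank $r-1$ oriented matroid can support. Fortunately \Cref{neighbors_max_ort} resolves this: if $T\in\mathcal{M}^{\perp_{(r+1)/2}}$ then $\Ort({_{-e}T})=\tfrac{r-1}{2}<\tfrac{r+1}{2}$, so ${_{-e}T}\notin\mathcal{M}^{\perp_{k+1}}$. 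Therefore $\mathcal{P}=\emptyset$, $A=0$, and the inequality already follows from the deletion bound alone.
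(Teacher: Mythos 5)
Your proof is correct and follows essentially the same route as the paper: the same translation to $(k+1)$-orthogonality via \Cref{relation_ort_neighb}, the same use of \Cref{k_ort_contract_deletion}(a) and (b) for the deletion and contraction bounds, and the same invocation of \Cref{neighbors_max_ort} to handle the boundary case $k+1=\frac{r+1}{2}$. Your explicit partition into $\mathcal{P}$ and $\mathcal{S}$ with the $2A+B\le(A+B)+A$ bookkeeping is a slightly more careful write-up of the counting that the paper states more tersely.
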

\begin{proof}
Let $k'=k+1$ and let
 $T$ be a $k'$-orthogonal tope of $\mathcal{M}$. First suppose that  $k'=\frac{r+1}{2}$. Then by \Cref{neighbors_max_ort}, every tope adjacent to $T$ in $\mathcal{G}(\mathcal{M})$, in particular ${{_{-e}T}}$ is not $k'$-orthogonal. Hence, the mapping $\mathcal{M}^{\perp_{k'}}\to(\mathcal{M}\setminus e)^{\perp_{k'}}$ sending $T$ to $T\setminus e$ which by  \Cref{k_ort_contract_deletion} (a) is well-defined furthermore is injective. Hence, $|\mathcal{M}^{\perp_{k'}}|\le |(\mathcal{M}\setminus e)^{\perp_{k'}}|$ and   by  \Cref{relation_ort_neighb} we get $m(\M,k) \le m(\M\setminus e,k)$.

If $k'<\frac{r+1}{2}$
notice that $\mathcal{M}/e$ has rank $r'=r-1$ and so,  $k'\le \lfloor\frac{r'+1}{2}\rfloor$.
 If however, the neighbor ${{_{-e}T}}$ of $T$ with respect to $e$ was also $k'$-orthogonal, then both will be mapped to the same $k'$-orthogonal tope  of $\mathcal{M}\setminus e$, but then the tope $T\setminus e$ of $\mathcal{M}/e$  is $k'$-orthogonal by   \Cref{k_ort_contract_deletion} (b).
 Thus, we conclude that $|\mathcal{M}^{\perp_{k'}}|\leq|(\mathcal{M}\setminus e)^{\perp_{k'}}|+|(\mathcal{M}/e)^{\perp_{k'}}|$.
By  \Cref{relation_ort_neighb}, this yields $m(\M,k) \le m(\M\setminus e,k)+m(\M/ e,k)$.\end{proof}






An analogue of the above result in terms of $o(\mathcal{M},k)$ is not true since topes $T$ with $\Ort(T)=k$ in $\mathcal{M}$ do not necessarily satisfy that $\Ort(T\setminus e)=k$ in $\mathcal{M}\setminus e$ or $\mathcal{M}/ e$, even if its neighbor $T'$ has $\Ort(T')=k$ as in  \Cref{k_ort_contract_deletion}. For instance, \Cref{figureB(3_5)B(3_4)} shows examples of topes $T$ with $\Ort(T)=1$ and $\Ort(T\setminus e)=2$ in $\mathcal{M}\setminus e$. In fact, for $r=4$ and $n=6$ we have that $36=o(\mathcal{C}_4(6),0)>o(\mathcal{C}_4(6)\setminus e,0)+o(\mathcal{C}_4(6)/e,0)=o(\mathcal{C}_4(5),0)+o(\mathcal{C}_3(5),0)=10+20=30$, where these values are obtained from  \Cref{example_$o$-vector}.

\smallskip

The following reduces  \Cref{quest:kRoudneff} for fixed $r$ and $k$ to a finite number of cases.

\begin{theorem}\label{prop:onlythebaseishard} Let $r\ge 3$ and $0\le k\le \lfloor\frac{r-1}{2}\rfloor$.
If $m(\M',k)\leq c_{r'}(n',k)$ for every uniform oriented matroid $\M'$ of rank $r'\leq r$ on $n'=2(r'-k)+1$ elements, then $m(\M,k)\leq c_{r}(n,k)$ for every oriented matroid $\M$ of rank $r$ on $n\geq 2(r-k)+1$ elements.
\end{theorem}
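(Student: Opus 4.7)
The plan is induction on $n$ (with all ranks $r'\leq r$ handled simultaneously), reducing via \Cref{thm:onlythebaseishard} until the base case supplied by the hypothesis is reached. As a preliminary step I would reduce to uniform oriented matroids: every oriented matroid $\M$ of rank $r$ on $n$ elements admits a uniform refinement $\tilde{\M}$ of the same rank and ground set with $\mathcal{T}(\M)\subseteq\mathcal{T}(\tilde{\M})$ (a standard perturbation / weak-map statement). Since $k$-neighborliness of a tope $T$ is equivalent to $B^{\mathcal{G}(\M)}_{k}(T)\cong B^{Q_n}_{k}(T)$ by \Cref{prop:topegraph}, and this ball can only grow under passing to $\tilde{\M}$, we obtain $m(\M,k)\le m(\tilde{\M},k)$, so it suffices to handle uniform oriented matroids.

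The base case $n=2(r-k)+1$ holds by hypothesis. For the inductive step $n>2(r-k)+1$, pick any $e\in E$ and apply \Cref{thm:onlythebaseishard}:
\[
m(\M,k)\le m(\M\setminus e,k)+m(\M/e,k).
\]
The deletion $\M\setminus e$ is uniform of rank $r$ on $n-1\ge 2(r-k)+1$ elements, so induction yields $m(\M\setminus e,k)\le c_r(n-1,k)$. The contraction $\M/e$ is uniform of rank $r-1$ on $n-1$ elements. If $k\le\lfloor(r-2)/2\rfloor$, then $k$ lies in the valid range for rank $r-1$ and $n-1\ge 2((r-1)-k)+1$, so induction gives $m(\M/e,k)\le c_{r-1}(n-1,k)$. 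Otherwise $r$ is odd and $k=(r-1)/2$: every circuit of $\M/e$ has size $r$, so $\min(|X^+|,|X^-|)\le r/2<k+1$, forcing $m(\M/e,k)=0$, while the defining sum for $c_{r-1}(n-1,k)$ is empty, so $c_{r-1}(n-1,k)=0$ as well.

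To close the induction I would verify the recurrence
\[
c_r(n,k)=c_r(n-1,k)+c_{r-1}(n-1,k),
\]
which by \Cref{O-formula_neighb} amounts to the identity $\sum_{i=k}^{\lfloor(r-1)/2\rfloor}2\binom{n}{r-1-2i}=\sum_{i=k}^{\lfloor(r-1)/2\rfloor}2\binom{n-1}{r-1-2i}+\sum_{i=k}^{\lfloor(r-2)/2\rfloor}2\binom{n-1}{r-2-2i}$. Pascal's rule $\binom{n}{j}=\binom{n-1}{j}+\binom{n-1}{j-1}$ applied termwise splits each left-hand summand into one contribution to $c_r(n-1,k)$ and one to $c_{r-1}(n-1,k)$; a short case analysis on the parity of $r$ shows the index ranges align exactly (with the spurious $j=-1$ term vanishing). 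Combined with the inequality above this gives $m(\M,k)\le c_r(n,k)$.

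The principal obstacle is the reduction to uniform oriented matroids: one must produce a uniform refinement $\tilde{\M}$ with $\mathcal{T}(\M)\subseteq\mathcal{T}(\tilde{\M})$ and verify that $k$-neighborly topes survive under this refinement. Both facts follow cleanly from \Cref{prop:topegraph} and the standard perturbation theory of oriented matroids, but this is the non-routine part of the argument and deserves explicit justification.
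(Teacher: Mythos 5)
Your proof is correct and follows essentially the same route as the paper's: reduce to the uniform case by perturbation, induct on $n$ via the deletion--contraction bound from \Cref{thm:onlythebaseishard}, and close with the Pascal-type recurrence $c_r(n,k)=c_r(n-1,k)+c_{r-1}(n-1,k)$ obtained from \Cref{O-formula_neighb}. If anything you are slightly more careful than the paper in two spots: you explicitly invoke \Cref{prop:topegraph} to justify that $k$-neighborly topes survive the passage to a uniform refinement (the paper asserts $m(\M,k)\le m(\M',k)$ with only the remark that the tope graph is a subgraph), and you separately handle the degenerate case $r$ odd, $k=(r-1)/2$, where $\M/e$ has rank $r-1$ with $k>\lfloor(r-2)/2\rfloor$, noting $m(\M/e,k)=0=c_{r-1}(n-1,k)$ so the recurrence still closes.
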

\begin{proof}
Every rank $r$ oriented matroid $\mathcal{M}$ on $n$ elements can be perturbed to become a uniform rank $r$ oriented matroid $\mathcal{M}'$ on $n$ elements, see~\cite[Corollary 7.7.9]{BVSWZ99}. In particular, we have that the tope graph $\mathcal{G}(\mathcal{M})$ is a subgraph of the tope graph $\mathcal{G}(\mathcal{M}')$. Hence, $m(\mathcal{M},k)\le m(\mathcal{M}',k)$  
 for every $k=0,\ldots, \lfloor\frac{r-1}{2}\rfloor$.
So, let us consider a uniform rank $r$ oriented matroid $\mathcal{M}$ on $n$ elements  and let $e\in E$.




Let us show the claim by induction on $r$ and $n$. By \Cref{thm:onlythebaseishard},
$m(\mathcal{M},k)\le m(\M\setminus e,k)+m(\M/e,k)$. Now, fix $r$ and let  $n>2(r-k)+1$. Notice that the inequality  $m(\M\setminus e,k)\le c_r(n-1,k)$ then follows by induction on $n$ since we know that it is verified for all uniform rank $r$ oriented matroids  on $n-1$ elements.
On the other hand, the inequality $m(\M/e,k)\le  c_{r-1}(n-1,k)$ follows since by assumption all uniform rank $r'=r-1$  oriented matroid $\mathcal{M}'$ on $n'=2(r'-k)+1$ elements satisfy $m(\M',k)\le c_{r}(n',k)$. Thus by induction this also holds for $n-1\ge 2(r-k)+1\ge 2(r'-k)+1$.
Now, a straight-forward computation using \Cref{O-formula_neighb} and the fact that $\sum\limits_{i=k}^{\lfloor\frac{r-1}{2}\rfloor} o(\mathcal{C}_r(n),i)=c_r(n,k)$, yields
$$c_{r}(n-1,k)+c_{r-1}(n-1,k)=c_{r}(n,k).$$ Thus, we obtain that $m(\mathcal{M},k)\le c_r(n,k)$.
\end{proof}

The following result answers \Cref{quest:kRoudneff} in the affirmative  for  odd $r$ and $k=\frac{r+1}{2}$.


\begin{corollary}\label{r_odd_max_ort}
  Let $\mathcal{M}$ be an oriented matroid of odd rank $r\ge3$ on $n\ge r+2$ elements and $k=\frac{r-1}{2}$. Then, $m(\mathcal{M},k)\le c_r(n,k)=2$.
  \end{corollary}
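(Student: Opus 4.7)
The plan is to invoke \Cref{prop:onlythebaseishard}, which reduces \Cref{quest:kRoudneff} for fixed $r$ and $k$ to verifying $m(\M',k)\leq c_{r'}(n',k)$ for every uniform oriented matroid $\M'$ of rank $r'\leq r$ on $n'=2(r'-k)+1$ elements. My first task is to identify which base cases actually arise when $k=\frac{r-1}{2}$ with $r$ odd. For $r' < r$, one has $\lfloor\frac{r'+1}{2}\rfloor < \frac{r+1}{2}=k+1$, so no tope of a rank $r'$ oriented matroid can be $(k+1)$-orthogonal; hence $m(\M',k)=0=c_{r'}(n',k)$ trivially by \Cref{relation_ort_neighb}. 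The only nontrivial base case is therefore rank $r'=r$ on $n'=2(r-k)+1=r+2$ elements.

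Next I would appeal to \Cref{oneclass_r+2} to conclude that every uniform rank $r$ oriented matroid on $r+2$ elements lies in the single reorientation class of $\mathcal{C}_r(r+2)$. Since $m(\cdot,k)$ depends only on the reorientation class (as it counts reorientations within the class), this immediately yields $m(\M',k)=m(\mathcal{C}_r(r+2),k)=c_r(r+2,k)$ for every such $\M'$, so the base inequality holds with equality.

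Finally, I would evaluate $c_r(n,k)$ via \Cref{O-formula_neighb}. Because $k=\lfloor\frac{r-1}{2}\rfloor$ is the maximum possible value, the identity $m(\mathcal{C}_r(n),k)=\sum_{i\ge k} o(\mathcal{C}_r(n),i)$ collapses to the single term $o(\mathcal{C}_r(n),k)$. Since $n\geq r+2 = 2(r-k)+1$, \Cref{O-formula_neighb} applies and gives
\[
c_r(n,k)=o(\mathcal{C}_r(n),k)=2\binom{n}{r-1-2k}=2\binom{n}{0}=2,
\]
which simultaneously certifies the value $c_r(n,k)=2$ claimed in the statement and provides the uniform base case bound needed by \Cref{prop:onlythebaseishard}.

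No substantial obstacle is expected: the argument is a direct chaining of \Cref{prop:onlythebaseishard}, \Cref{oneclass_r+2}, and \Cref{O-formula_neighb}. The only care required is verifying that no extra base cases survive for smaller $r'$, which as noted above is immediate from the neighborliness constraint.
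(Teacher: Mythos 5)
Your proposal matches the paper's proof essentially step for step: both invoke \Cref{prop:onlythebaseishard} to reduce to the single base case of rank $r$ on $n=r+2$ elements, both dismiss the smaller-rank base cases $r'<r$ by noting that $k>\lfloor\frac{r'-1}{2}\rfloor$ forces the relevant counts to vanish, both appeal to \Cref{oneclass_r+2} to settle the remaining base case via uniqueness of the reorientation class, and both evaluate $c_r(n,k)=2$ via \Cref{O-formula_neighb} using that $k=\lfloor\frac{r-1}{2}\rfloor$ is maximal so $m(\cdot,k)=o(\cdot,k)$. The argument is correct and the route is the paper's.
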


 \begin{proof}

   As $k=\frac{r-1}{2}$, we notice that $o(\mathcal{M},k)=m(\mathcal{M},k)$ and $o(\mathcal{C}_r(n),k)=c_r(n,k)$. Moreover, $c_r(n,k)=2$ by \Cref{O-formula_neighb}.
  In order to prove that $m(\mathcal{M},k)\le c_r(n,k)$ for $n\ge r+2$, it is sufficient by \Cref{prop:onlythebaseishard} to verify it for all uniform rank $r$ oriented matroid $\mathcal{M}$ on $n=r+2=2(r-k)+1$ elements, since for smaller rank $r'<r$ we will have $k>\frac{r'-1}{2}$ and hence the $k$-entries of all $o$-vectors are $0$. As in that case there is only one reorientation class (see \Cref{oneclass_r+2}), we obtain that
$o(\mathcal{M},k)=o(\mathcal{C}_r(n),k)$,  concluding the proof.
\end{proof}



\section{$k$-McMullen's problem and $k$-Roudneff's conjecture for low ranks}\label{computer}


\subsection{A computer program that obtains $o(\M,k)$}\label{computer_program}

Next, we explain  how we can obtain the $o$-vector of a uniform oriented matroid from its chirotope:

\medskip

\noindent\emph{chirotope $\rightarrow$ circuits:}

\noindent In a uniform oriented matroid the chirotope $\chi(B)\neq 0$ for every ordered set of size $r$. Further, the supports of its circuits correspond to all sets of size $r+1$.
It is well-known, see \cite[Section 3.5]{BVSWZ99}, that from the chirotope of a uniform oriented matroid, we obtain the signs of a circuit $X$ with $\underline{X}=\{b_1,...,b_{r+1}\}$ via:
$$\chi(B)=-X_{b_i}\cdot X_{b_{i+1}}\cdot \chi(B'),$$
where $B=\underline{X}\setminus b_i$ and $B'=\underline{X}\setminus b_{i+1}$.
This allows us to compute the set $\mathcal{C}$ from $\chi$.

\medskip

\noindent\emph{circuits $\rightarrow$ $o$-vector:}

\noindent For any sign-vector $T\in \{+,-\}^n$, we obtain $\Ort(T)=\min\{X\perp T \text{ } | \text{ } X\in \mathcal{C}\}$ and so, using the correspondence $o(\M,k)=|\mathcal{O}_{k+1}(\M)|$ given in \Cref{relation_ort_neighb}, the $o$-vector of $\mathcal{M}$.

\bigskip

Finschi and Fukuda \cite{FinschiFukuda2002,Finschi2001}
generated (up to isomorphism) all the chirotopes of uniform rank~$r$ oriented matroids on~$n$ elements,  for $4\le r\le7$ and $n=r+3$, and moreover classified them by realizability and for $r=5$ and $n=9$ (where
some of the data and also their source code for the enumeration is available only upon request from Lukas Finschi).
We implemented the above procedure in  Python (available at
\cite{supplemental_data})
giving us the $o$-vector of all the reorientation classes from the database. 
We resume the results in  the following theorem:

\begin{theorem}\label{thm_computer}
 Let $\mathcal{M}\not\in [C_r(n)]$ be a  uniform rank $r$ oriented matroid on $n$ elements, then the following hold:
 \begin{itemize}
    \item [(a)]  if $r=5$ and $n=8$, then $o(\mathcal{M},1)<o(\mathcal{C}_5(8),1)$,  $m(\mathcal{M},1)<c_5(8,1)$, $m(\mathcal{M},2)\le c_5(8,2)$ and there are exactly $3$ reorientation  classes with $m(\mathcal{M},2)=c_5(8,2)$. 
   Moreover,
     there exists $\mathcal{M}$ realizable such that $m(\mathcal{M},2)=0$; 
    \item [(b)] if $r=5$ and $n=9$, then $o(\mathcal{M},1)<o(\mathcal{C}_5(9),1)$, $m(\mathcal{M},1)<c_5(9,1)$, $m(\mathcal{M},2)\le c_5(9,2)$ and there are exactly $23$ reorientation classes with $m(\mathcal{M},2)=c_5(9,2)$; 
    \item [(c)] if $r=6$ and $n=9$, then $m(\mathcal{M},1)<c_6(9,1)$ and $m(\mathcal{M},2)<c_6(9,2)$. Moreover, there are exactly $91$ reorientation classes having $o(\mathcal{M},1)>o(\mathcal{C}_6(9),1)$ and there exists
    $\mathcal{M}$ realizable such that $m(\mathcal{M},2)=0$; 
    \item [(d)] 
     if $r=7$ and $n=10$, then $m(\mathcal{M},1)<c_7(10,1)$,
 $0<o(\mathcal{M},2)<o(\mathcal{C}_7(10),2)$, $m(\mathcal{M},2)<c_7(10,2)$,  $m(\mathcal{M},3)\le c_7(10,3)$ and there are exactly $37$ reorientation classes  with  $m(\mathcal{M},3)=c_7(10,3)$. Moreover, there are exactly $312336$ reorientation classes having $o(\mathcal{M},1)>o(\mathcal{C}_7(10),1)$ and
 there exists  $\mathcal{M}$ realizable such that $m(\mathcal{M},3)=0$. 
 \end{itemize}
\end{theorem}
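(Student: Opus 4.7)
The plan for proving \Cref{thm_computer} is an exhaustive computer enumeration, since the statement consists of precise numerical claims about four fixed pairs $(r,n)$. First I would load the Finschi--Fukuda database, which lists up to isomorphism every chirotope of a uniform rank $r$ oriented matroid on $n$ elements for the required parameters $(5,8),(5,9),(6,9),(7,10)$, together with the classification of each class as realizable or not. For each chirotope $\chi$ I would apply the chirotope-to-circuits conversion described immediately before the theorem to recover the full circuit set $\mathcal{C}$.

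Next, for every sign-vector $T\in\{+,-\}^n$ I would compute $\Ort(T)=\min\{X\perp T\mid X\in\mathcal{C}\}$, which yields the $o$-vector of $\M$ via the identity $o(\M,k)=|\mathcal{O}_{k+1}(\M)|$ from \Cref{relation_ort_neighb}, and then the cumulative numbers $m(\M,k)=\sum_{i\ge k}o(\M,i)$. The reference values $c_r(n,k)$ come either from \Cref{O-formula_neighb} (when $n\ge 2(r-k)+1$) or, in the boundary cases, by running the same routine on the alternating chirotope of $\mathcal{C}_r(n)$. With these vectors in hand, each of the inequalities in parts (a)--(d) reduces to a direct tally over the database: counting how many non-cyclic reorientation classes strictly exceed, equal, or fall below the corresponding value of $\mathcal{C}_r(n)$.

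For the realizability assertions---the existence of realizable $\M$ with $m(\M,2)=0$ in (a) and (c), and with $m(\M,3)=0$ in (d)---I would restrict the iteration to the sub-database flagged as realizable by Finschi and Fukuda and exhibit a concrete class attaining the stated value. Since $m(\M,k)=0$ means $\M$ admits no $k$-neighborly reorientation, these same examples furnish the upper bounds $\nu_{\mathrm{R}}(5,2),\nu_{\mathrm{R}}(6,2)<8$ and $\nu_{\mathrm{R}}(7,3)<10$ used in \Cref{small_values_kMcMullen}.

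The main obstacle is sheer size: the case $r=7$, $n=10$ contains an enormous number of reorientation classes, and a naive double loop (over classes and over all $2^{n}$ topes per class) is infeasible. To make the computation tractable I would pre-compute the $\binom{n}{r+1}$ circuit supports once, store chirotopes in a compact packed format, early-abort the evaluation of $\Ort(T)$ as soon as some circuit witnesses $X\perp T<k+1$ for the target $k$, and exploit the $T\mapsto -T$ symmetry to halve the tope enumeration. Beyond the core maxima, the exact figures $3$, $23$, $37$, $91$, and $312336$ require iterating each class to completion rather than stopping at the first witness, so careful bookkeeping and parallelisation across chirotopes would be essential. The resulting counts, produced by the Python implementation available at \cite{supplemental_data}, yield all assertions of the theorem.
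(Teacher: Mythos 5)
Your proposal is correct and follows essentially the same route as the paper: enumerate the Finschi--Fukuda database of chirotopes for the relevant $(r,n)$ pairs, convert each chirotope to its circuit set, compute $\Ort(T)$ over all sign-vectors to get each class's $o$-vector via \Cref{relation_ort_neighb}, compare against $\mathcal{C}_r(n)$, and use the realizability flags for the existence claims. The implementation optimisations you describe (early abort, $\pm T$ symmetry, packed storage) are reasonable practical additions but do not change the mathematical argument, which matches the paper's.
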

\begin{example}\label{example_$o$-vector}
  Since the chirotope of the alternating oriented matroid is always $+$, using our computer program we compute the $o$-vector of $\mathcal{C}_r(n)$ for some values of $r$ and $n$  in Figure \ref{table}. For all these values and $k$ such that $n\le2(r-k)$, we notice that $o(\mathcal{C}_r(n),k)\neq2\binom{n}{r-1-2k}$, showing that  \Cref{O-formula_neighb} is best possible.
\end{example}

 \begin{figure}[htb]
\begin{center}
 \includegraphics[width=1.05\textwidth]{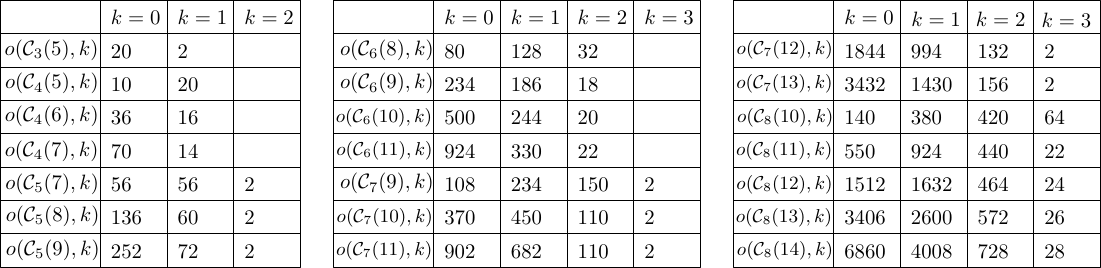}
 \caption{} \label{table}
 \end{center}
\end{figure}
\subsection{Results on $k$-McMullen's problem}
Next, we answer \Cref{question_k-McMullen} affirmatively for  $(r,k)\in (5,2),(6,2),(7,3)$. Further, we show that the lower bound in \Cref{question_k-McMullen} is tight in one more case, i.e., $10\le \nu(7,2)$.

\begin{theorem}\label{small_values_kMcMullen}
We have
   $\nu(5,2)=\nu_{\mathrm{R}}(5,2)=7$, $\nu(6,2)=\nu_{\mathrm{R}}(6,2)=8$, $\nu(7,3)=\nu_{\mathrm{R}}(7,3)=9$, and
$10\le \nu_{\mathrm{R}}(7,2)$. 
\end{theorem}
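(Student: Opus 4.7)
The plan is to combine the general lower bound of \Cref{McMullenr+2} with the computer-assisted upper bounds already stated in \Cref{thm_computer}, and to handle the realizable lower bound $\nu_{\mathrm{R}}(7,2)\ge 10$ by a direct computer verification analogous to the procedure described at the beginning of this section.

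For the three equalities, observe that in each of $(r,k)\in\{(5,2),(6,2),(7,3)\}$ one has $\lfloor(r-1)/k\rfloor=2$, so \Cref{McMullenr+2} immediately yields $\nu(r,k)\ge r+\lfloor(r-1)/k\rfloor$, i.e., $\nu(5,2)\ge 7$, $\nu(6,2)\ge 8$, and $\nu(7,3)\ge 9$; these lower bounds transfer to $\nu_{\mathrm{R}}(r,k)$ since $\nu(r,k)\le\nu_{\mathrm{R}}(r,k)$. For the matching upper bounds, parts (a), (c), and (d) of \Cref{thm_computer} each exhibit a realizable uniform rank-$r$ oriented matroid $\mathcal{M}$ on $r+\lfloor(r-1)/k\rfloor+1$ elements with $m(\mathcal{M},k)=0$, i.e.\ with no $k$-neighborly reorientation. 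This forces $\nu_{\mathrm{R}}(r,k)\le r+\lfloor(r-1)/k\rfloor$, and combined with the lower bound closes all three equalities.

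The remaining inequality $10\le\nu_{\mathrm{R}}(7,2)$ falls outside the scope of \Cref{McMullenr+2} (because $\lfloor 6/2\rfloor=3>2$) and of \Cref{cotainf3_v(rk)} (because $6\equiv 0\pmod{2}$, whereas the theorem requires $\beta\ge\lceil(k-1)/2\rceil=1$), so we verify it computationally. Using the Finschi--Fukuda database of chirotopes of uniform rank-$7$ oriented matroids on $n=10=r+3$ elements together with the realizability data, we iterate over every realizable class in the database and apply the chirotope $\to$ circuits $\to$ $o$-vector pipeline of the present section to each one, checking that $m(\mathcal{M},2)>0$. Since $m(\mathcal{M},2)$ is a reorientation (and hence isomorphism) invariant, it is enough to test a single chirotope per isomorphism class.

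The main obstacle is purely computational: for rank $7$ on $10$ elements the enumeration is already large (cf.\ the count $312336$ appearing in \Cref{thm_computer}(d)), and for every class one must sweep over all $2^{10}$ sign-vectors and all $\binom{10}{8}=45$ circuit supports to compute the orthogonality values $\Ort(T)$. The Python implementation of \cite{supplemental_data} performs this task, and the content of $\nu_{\mathrm{R}}(7,2)\ge 10$ reduces to the observation that no realizable $\mathcal{M}$ in the database satisfies $m(\mathcal{M},2)=0$.
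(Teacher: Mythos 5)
Your argument for the three equalities $\nu(5,2)=\nu_{\mathrm{R}}(5,2)=7$, $\nu(6,2)=\nu_{\mathrm{R}}(6,2)=8$, $\nu(7,3)=\nu_{\mathrm{R}}(7,3)=9$ is correct and identical to the paper's: lower bounds from \Cref{McMullenr+2}, upper bounds from the realizable matroids with $m(\mathcal{M},k)=0$ exhibited in \Cref{thm_computer} (a), (c), (d).

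For the remaining inequality $10\le\nu_{\mathrm{R}}(7,2)$ you propose a fresh computer sweep over the realizable rank-$7$, $10$-element classes. That would work, but it is unnecessary: you overlooked that \Cref{thm_computer}(d) already records $0<o(\mathcal{M},2)$ for \emph{every} uniform rank-$7$ oriented matroid $\mathcal{M}\notin[\mathcal{C}_7(10)]$ on $10$ elements. Since $o(\mathcal{M},2)\le m(\mathcal{M},2)$ and $\mathcal{C}_7(10)$ is itself neighborly, this already gives $m(\mathcal{M},2)>0$ for all uniform rank-$7$ matroids on $10$ elements, hence the stronger $10\le\nu(7,2)$, from which $10\le\nu_{\mathrm{R}}(7,2)$ follows. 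The paper simply cites \Cref{thm_computer}(d) here. A minor side benefit of the paper's route is that it does not rely on having realizability classification data for $r=7$, $n=10$, which the enumeration described at the start of the section does not necessarily guarantee. So your proposal is correct but does redundant work and proves only the realizable-restricted bound where the stronger general bound is already available in the paper.
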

\begin{proof}
By \Cref{McMullenr+2}, we have $7\le\nu(5,2)$, $8\le\nu(6,2)$
and $9\le\nu(7,3)$ (and so, also $7\le\nu_{\mathrm{R}}(5,2)$, $8\le\nu_{\mathrm{R}}(6,2)$
and $9\le\nu_{\mathrm{R}}(7,3)$). The lower bound  $10\le \nu(7,2)$ holds since  by \Cref{thm_computer} (d), $0<o(\mathcal{M},2)$ 
for any uniform rank $7$ oriented matroid  $\mathcal{M}$ on $10$ elements.
 On the other hand, the upper bounds $\nu_{\mathrm{R}}(5,2)<8$, $\nu_{\mathrm{R}}(6,2)<9$ and $\nu_{\mathrm{R}}(7,3)<10$ hold by \Cref{thm_computer} (a), (c) and (d), respectively, since there exists a rank $r$ realizable uniform oriented matroid $\mathcal{M}$ on $n$ elements such that $m(\mathcal{M},k)=0$, for $(r,k,n)\in (5,2,8),(6,2,9),(7,3,10)$.  
 Then,  $\nu(5,2)<8$, $\nu(6,2)<9$ and $\nu(7,3)<10$ and the result follows.
\end{proof}

\subsection{Results on $k$-Roudneff's conjecture}
 \Cref{quest:kRoudneff} can be reduced to uniform oriented matroids and reduce to a finite problem for fixed $r$ and $k$ by \Cref{prop:onlythebaseishard}. Next, we may
 answer \Cref{quest:kRoudneff} affirmatively for $r=6$ and $k=2$. 

\begin{theorem}\label{coro_cyclique_unique} 
    Let $\mathcal{M}$ be a rank $6$ oriented matroid on $n\ge 9$ elements. Then, $m(\mathcal{M},2)\le c_6(n,2).$
\end{theorem}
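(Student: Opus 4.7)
The plan is to reduce to the finite case analysis enabled by \Cref{prop:onlythebaseishard} and then invoke the computational result in \Cref{thm_computer}(c). Setting $r=6$ and $k=2$ in \Cref{prop:onlythebaseishard}, the base value is $2(r-k)+1 = 9$, which matches the hypothesis $n\ge 9$. Hence it suffices to verify the bound $m(\M',2)\le c_{r'}(n',2)$ for every uniform oriented matroid $\M'$ of rank $r'\le 6$ on $n'=2(r'-2)+1$ elements.

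I would split this verification by the value of $r'$. For $r'\le 4$ we have $k=2 > \lfloor (r'-1)/2\rfloor$, so by definition no reorientation can be $2$-neighborly, and both sides vanish. For $r'=5$ one has $n'=7=r'+2$, so by \Cref{oneclass_r+2} there is a single reorientation class on these parameters, which must therefore be $[\mathcal{C}_5(7)]$, giving $m(\M',2)=c_5(7,2)$ trivially. The only nontrivial case is $r'=6$, $n'=9$, which is exactly the content of \Cref{thm_computer}(c): for every uniform rank $6$ oriented matroid $\M'$ on $9$ elements, either $\M'\in [\mathcal{C}_6(9)]$ (in which case $m(\M',2)=c_6(9,2)$) or $m(\M',2)<c_6(9,2)$.

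Combining these three subcases gives the hypothesis of \Cref{prop:onlythebaseishard}, and the conclusion $m(\M,2)\le c_6(n,2)$ follows for every (not necessarily uniform) rank $6$ oriented matroid $\M$ on $n\ge 9$ elements. There is no real obstacle beyond quoting \Cref{thm_computer}(c); the work is in the computer enumeration of reorientation classes of uniform rank $6$ oriented matroids on $9$ elements (via the Finschi--Fukuda database and the chirotope-to-$o$-vector pipeline described in \Cref{computer}), which has already been carried out.
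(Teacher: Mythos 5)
Your proof is correct and follows essentially the same route as the paper: reduce via \Cref{prop:onlythebaseishard} to uniform oriented matroids of rank $r'\le 6$ on $2(r'-2)+1$ elements, handle the $r'=6$, $n'=9$ case by \Cref{thm_computer}(c), and dismiss the smaller ranks. The only cosmetic difference is that for $r'=5$, $n'=7$ you invoke \Cref{oneclass_r+2} directly rather than citing \Cref{r_odd_max_ort}, but the latter's proof rests on exactly that remark, so the underlying argument is identical.
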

\begin{proof}
By \Cref{thm_computer} (c),  $m(\mathcal{M},2)\le c_6(9,2)$
 for all rank $6$  uniform  oriented matroid  $\mathcal{M}$ on $9$ elements.  On the other hand, it is known that
 $m(\mathcal{M}',2)\le c_5(n,2)$
 for any rank $5$ oriented matroid $\mathcal{M}'$   on $n\ge 7$ elements by \Cref{r_odd_max_ort}. 
 Then, the result follows for $n\ge 9$ by  \Cref{prop:onlythebaseishard}.
\end{proof}

\subsubsection*{Acknowledgements.} We thank Helena Bergold and Manfred Scheucher for fruitful discussions and also M. Scheucher for being the intermediary who provided us with the data and the source code of L. Finschi.
 R. Hern\'andez-Ortiz and L.\ P.\ Montejano were supported by SGR Grant 2021 00115.
K. Knauer was supported by the Spanish State Research Agency
through grants RYC-2017-22701, PID2019-104844GB-I00, PID2022-137283NB-C22 and the Severo Ochoa and María de Maeztu Program for Centers and Units of Excellence in R\&D (CEX2020-001084-M).

\bibliographystyle{my-siam}
\bibliography{lit}

\newpage
\section*{appendix}\label{appendix}\label{appendix}
We present the formulation of McMullen's problem and Roudneff's conjecture in their original versions. This is done merely out of illustrative reasons and none of our proofs or arguments outside of this appendix are based on what we will briefly explain below. 

\subsection*{McMullen's problem} 

Oriented matroids generalize the oriented linear algebra of Euclidean space in the following way: let $M\in\mathbb{E}^{m\times n}$ a real $m\times n$ matrix, then $\M_M=([n],\mathcal{C}_M)$ is an oriented matroid where $X\in \mathcal{C}_M$ if there is a minimal linear combination $\lambda_{i_1}c_{i_1}+\ldots+\lambda_{i_k}c_{i_k}=\mathbf{0}$ of columns of $M$ such that $\underline{X}=\{i_1, \ldots, i_k\}$ and $X_{i_j}$ is the sign of $\lambda_{i_j}$ for all $1\leq j\leq k$. The rank of $\M_M$ is the rank of $M$. If $\M$ arises this way from a matrix $M$, it is called \emph{realizable}. Realizable oriented matroids form a small subclass of all oriented matroids~\cite[Corollary 7.4.3]{BVSWZ99}, but capture hyperplane arrangements, point configurations, linear programming, and directed graphs.

\smallskip

McMullen's problem (\Cref{conjecture_McMullen})  was originally formulated for realizable uniform oriented matroids  in the language of projective transformations~\cite{L72} and then, more generally in terms of uniform oriented matroids \cite{CS85}. A \emph{projective transformation} $T:\mathbb{R}^{d} \rightarrow \mathbb{R}^{d}$ is a function such that $T(x)=\frac{Ax+b}{\langle c,x \rangle + \delta}$, where $A$ is a linear transformation of $\mathbb{R}^{d}$, $b, c \in \mathbb{R}^{d}$ and $\delta \in \mathbb{R},$ is such that at least one of $c\neq 0$ or $\delta \neq 0$. Further, $T$ is said to be \emph{permissible} for a set $X\subset \mathbb{R}^{d}$ if  $\langle c,x \rangle + \delta \neq 0$ for all $x\in X$ (see \cite[Appendix 2.6]{ZIEG1995}). It turns out that permissible projective transformations on $n$ points in $\R^d$ correspond to acyclic reorientations of the corresponding rank $r=d+1$ oriented matroid $\mathcal{M}$ on $n$ elements, see \cite[Theorem 1.2]{CS85}.
Therefore, as a set $X$ of $n$ points in general position in $\mathbb{R}^{d}$ corresponds to a realizable uniform oriented matroid $\M$ of rank $r=d+1$ on $n$ elements and  a permissible projective transformation leading $X$ to the vertices of a convex polytope corresponds to a $1$-neighborly reorientation of $\M$, we get Conjecture \ref{conjecture_McMullen} in its original version \cite[P.1.]{L72}.

\begin{problem*}[McMullen 1972]
Determine the largest number $\nu(d)$ such that any set of $\nu(d)$ points, lying in general position in $\mathbb{R}^{d}$, may be mapped by a permissible projective transformation onto the set of vertices of a convex polytope.
\end{problem*}

\subsection*{Roudneff's conjecture} 

The Topological Representation Theorem states that the reorientation classes of simple oriented matroids on $n$ elements and rank $r$ are in one-to-one correspondence with the classes of isomorphism of arrangements of $n$ pseudospheres in~$S^{r-1}$ \cite{FL78}. See  \cite[Theorem 1.4.1]{BVSWZ99} for the precise definitions of such arrangements. There is a natural identification between pseudospheres and pseudohyperplanes as follows: Recall that $\p^{r-1}$ is the topological space obtained from $S^{r-1}$ by identifying all pairs of antipodal points. The double covering map $\pi: S^{r-1} \rightarrow \p^{r-1}$, given by $\pi(x)=\{x,-x\}$, gives an identification of centrally symmetric subsets of $S^{r-1}$ and general subsets of $\p^{r-1}$. This way centrally symmetric pseudospheres in $S^{r-1}$ correspond to pseudohyperplanes in $\p^{r-1}$. Since the pseudoshperes in the Topological Representation Theorem can be assumed to be centrally symmetric, we get a  statement in terms of pseudohyperplanes in $\p^{r-1}$, i.e., the reorientation classes of simple oriented matroids on $n$ elements and rank $r$ are in one-to-one correspondence with the classes of isomorphism of arrangements of $n$ pseudohyperplanes in~$\p^{r-1}$. See~\cite[Exercise 5.8]{BVSWZ99}. In this model one usually uses the \emph{dimension} $d=r-1$ of the rank. Given an arrangement $\mathcal{H}(d,n)$ of $n$ pseudohyperplanes in $\p^{d}$ representing an reorientation class of $[\M]$, any given element of a class can be obtained by choosing for each pseudohyperplane $H_e\in\mathcal{H}(d,n)$ in which of its two sides is positive and which is negative. Now, every point in $x\in \p^{d}$ yields a sign-vector $X$, where $X_e$ is $0,+,-$ depending on whether $x$ lies on $H_e$, on its positive side, or its negative side.


\smallskip

An arrangement $\mathcal{H}(d,n)$ of $n$ pseudohyperplanes in $\p^{d}$ is called {\em simple} if every intersection of $d$ pseudohyperplanes is a unique distinct point. Simple arrangements correspond to reorientation classes of uniform oriented matroids. The maximal cells of the arrangement $\mathcal{H}(d,n)$ correspond to one half of the topes of the oriented matroid $\mathcal{M}$ (obtained by factoring the antipodal map). The topes then corresponds to the acyclic reorientation of $\mathcal{M}$, by orienting the pseudohyperplanes such that points inside the corresponding maximal cell are all-positive.
A complete cell of $\mathcal{H}(d,n)$ is a maximal cell that is bounded by every hyperplane of the arrangement. In the corresponding reorientation  of $\M$, reorienting any element of $E$ results in another acyclic reorientation, i.e., the one corresponding to the adjacent maximal cell. This is, complete cells correspond to $1$-neighborly reorientations.
\smallskip

On the other hand, \emph{Cyclic
arrangements} are defined as the lattice theoretical dual or polar of the point set given by the vertices of the cyclic polytope  which hence have $\frac{1}{2}c_{r-1}(n,1)$ complete cells.
Since the latter number for $n\geq 2r-1$ can be expressed as a sum of binomial coefficients (see~\cite[Theorem 2.1]{R99}), we get Conjecture \ref{conj1} in its original version\cite[Conjecture 2.2]{R99}.
\begin{conjecture*}[Roudneff 1991]
    Every arrangement of $n\geq 2d+1\geq 5$ (pseudo)hyperplanes in $\p^d$ has at most $\sum_{i=0}^{d-2}{n-1 \choose i}$ complete cells.
\end{conjecture*}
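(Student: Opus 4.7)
The plan is to deduce this topological form of Roudneff's conjecture from its oriented matroid version \Cref{conj1} via the Topological Representation Theorem, and then attack \Cref{conj1} by combining the finite reduction \Cref{prop:onlythebaseishard} with an inductive structural argument. By the Topological Representation Theorem, an arrangement $\mathcal{H}(d,n)$ of $n$ pseudohyperplanes in $\p^d$ corresponds to the reorientation class of a simple oriented matroid $\M$ of rank $r=d+1$ on $n$ elements. As described in \Cref{sec:TopRep}, complete cells of $\mathcal{H}(d,n)$ are in bijection with the $1$-neighborly reorientations of $\M$ modulo the antipodal identification, so there are exactly $\tfrac{1}{2}m(\M,1)$ of them. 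Since for $n\ge 2r-1$ the formula $c_r(n,1)=2\sum_{i=0}^{r-3}\binom{n-1}{i}$ holds, the target bound $\sum_{i=0}^{d-2}\binom{n-1}{i}$ equals $\tfrac{1}{2}c_r(n,1)$, so the conjecture becomes $m(\M,1)\le c_r(n,1)$, which is precisely \Cref{conj1}.

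Next, I would reduce to the uniform case. The topological statement makes no simplicity hypothesis on $\mathcal{H}(d,n)$, but most of the machinery is phrased for uniform oriented matroids. A standard generic perturbation turns any simple $\M$ into a uniform $\widetilde{\M}$ on the same ground set and rank such that every $1$-neighborly reorientation of $\M$ lifts to at least one $1$-neighborly reorientation of $\widetilde{\M}$, so the uniform case of \Cref{conj1} implies the general one. Applying then \Cref{prop:onlythebaseishard} with $k=1$, it is enough to verify $m(\M,1)\le c_r(n,1)$ for uniform oriented matroids with $r+1\le n\le 2r-1$, together with the base instances $n'=2r'-1$ for every rank $r'\le r$. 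The cases $r\le 5$ are already settled in \cite{R99} and \cite{HOKMS23}, so only the finite residual ranges remain.

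The hard part is completing this finite analysis uniformly in $r\ge 6$. The number of reorientation classes of uniform oriented matroids of rank $r$ on roughly $2r$ elements grows super-exponentially in $r$, so exhaustive enumeration fails already for modest ranks and a structural argument is unavoidable. I would pursue induction on $r$ via single-element deletion, tracking the refined $o$-vector introduced in \Cref{section_general}: a $1$-neighborly reorientation of $\M$ restricts to a $1$-neighborly reorientation of $\M\setminus e$, and the hope is to bound the fibre of this restriction by analysing the local covector structure at $e$, measuring how the different entries of the $o$-vector of $\M\setminus e$ constrain how many lifts to $\M$ can be $1$-neighborly. Controlling the overcount that this naive restriction produces, particularly in the non-realizable regime where the alternating oriented matroid is not the only extremal example (as witnessed for $r=6,7$ and $k=1$ in \Cref{coro_cyclique_unique}), is in my view the main missing ingredient.
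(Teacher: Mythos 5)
The statement you are addressing is a conjecture, not a theorem: the paper neither proves it nor claims to. What the paper actually does, in the text immediately preceding the conjecture* environment, is exactly what your first paragraph reconstructs — namely, derive the equivalence of the topological statement about complete cells with the oriented-matroid statement \Cref{conj1}, via the Topological Representation Theorem, the identification of complete cells with $1$-neighborly reorientations modulo the antipodal map (hence the factor $\tfrac12$), and the closed formula $c_r(n,1)=2\sum_{i=0}^{r-3}\binom{n-1}{i}$ valid for $n\ge 2r-1$, so that $\tfrac12 c_r(n,1)=\sum_{i=0}^{d-2}\binom{n-1}{i}$ with $r=d+1$. Your first paragraph therefore mirrors the paper's reasoning faithfully.

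Your second and third paragraphs attempt something the paper does not: an actual proof of the conjecture. Roudneff's conjecture is open; the paper only verifies it for $r\le 5$ (citing \cite{R99,HOKMS23}), and uses the new finite reduction \Cref{prop:onlythebaseishard} to settle $r=6$, $k=2$ (\Cref{coro_cyclique_unique}) and odd $r$ with $k=\tfrac{r-1}{2}$ (\Cref{r_odd_max_ort}), not the $k=1$ case in general rank. Your reduction to uniform oriented matroids is correct — the perturbation argument and the monotonicity $m(\M,k)\le m(\M',k)$ are exactly what is used inside the proof of \Cref{prop:onlythebaseishard} — and invoking that theorem with $k=1$ is also a legitimate step. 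But the ``inductive structural argument'' that would control the fibres of the deletion map $\M\mapsto\M\setminus e$ uniformly across ranks, which you identify as the main missing ingredient, is indeed missing, and you supply no candidate lemma that could replace \Cref{thm:onlythebaseishard} with a bound sharp enough to close the induction. Since that gap is unfilled, the proposal is not a proof; given that the conjecture is open, that is unavoidable, and the honest thing to report is that only the equivalence with \Cref{conj1} and the partial results of Sections~\ref{sec:Roudneff}--\ref{computer} are established.
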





\end{document}